\numberwithin{equation}{section}
\theoremstyle{plain}
\newtheorem{theorem}{Theorem}[section]
\newtheorem{corollary}[theorem]{Corollary}
\newtheorem{proposition}[theorem]{Proposition}
\theoremstyle{definition}
\newtheorem{definition}[theorem]{Definition}
\theoremstyle{remark}
\newtheorem{remark}[theorem]{Remark}
\newtheorem{case[theorem]}{Case}
\def \R{{\mathbb R}}
\def \N{{\mathbb N}}
\def \Z{{\mathbb Z}}
\def \T{{\mathbb T}}
\def\supp{\hbox{supp\,}}
\def\norm#1.#2.{\lVert#1\rVert_{#2}}
\def\R{\mathbb R}
\title[Uncertainty principles for the short-time Fourier transform on the lattice]{Uncertainty principles for the short-time Fourier transform on the lattice}
\author{Anirudha Poria}
\author{Aparajita Dasgupta} 
\thanks{Research supported by Core Research Grant (RP03890G), Science and Engineering Research Board (SERB), DST, India.}
\address{Department of Applied Mathematics, School of Mathematics and Physics, Xi’an Jiaotong-Liverpool University,  Suzhou 215123, Jiangsu, China}
\email{Anirudha.Poria@xjtlu.edu.cn, anirudhamath@gmail.com}
\address{Department of Mathematics, Indian Institute of Technology Delhi, New Delhi 110016, India} 
\email{adasgupta@maths.iitd.ac.in}
\keywords{Short-time Fourier transform; Heisenberg-type uncertainty inequality; Donoho--Stark’s uncertainty principle; Benedicks-type uncertainty principle}
\subjclass[2010]{Primary 44A15; Secondary 42C20, 33C45.}
\date{\today}
\begin{document}
\maketitle
\begin{abstract} 
In this paper, we study a few  versions of  the uncertainty principle for the short-time Fourier transform on the lattice $\Z^n \times \T^n$. In particular, we establish the  uncertainty principle for orthonormal sequences, Donoho--Stark's uncertainty principle,  Benedicks-type uncertainty principle, Heisenberg-type uncertainty principle and local uncertainty inequality for this transform on $\Z^n \times \T^n$. Also, we obtain the Heisenberg-type uncertainty inequality using the $k$-entropy of the short-time Fourier transform on $\Z^n \times \T^n$.
\end{abstract}

\section{Introduction}

In classical harmonic analysis, uncertainty principles are inequalities or uniqueness theorems concerning the joint localization of a function and its spectrum. Uncertainty principles play a fundamental role in the field of mathematics, physics, in addition to some engineering areas such as quantum theory, image processing, signal processing, optics, and many other well-known areas \cite{ric14, gro01, dono89, bial85}.  In quantum physics, it says that a particle’s speed and position cannot both be measured with infinite precision. The classical Heisenberg uncertainty principle was established in the Schr\"odinger space (the square-integrable function space). It states that a non-zero function and its Fourier transform cannot be both sharply localized, i.e.,  it is impossible for a non-zero function and its Fourier transform to be simultaneously small. This phenomenon has been under intensive study for almost a century now and extensively investigated in different settings. One can formulate different forms of the uncertainty principle depending on various ways of measuring the localization of a function. Uncertainty principles can be divided into two  categories: quantitative and qualitative uncertainty principles. Quantitative uncertainty principles are some special types of inequalities that tells us how a function and its Fourier transform relate. For example,  Benedicks \cite{ben95}, Donoho and Stark \cite{dono89}, and  Slepian and Pollak \cite{sle61} gave quantitative uncertainty principles for the Fourier transforms.  On the other hand, qualitative uncertainty principles   imply the vanishing of a function under some strong conditions on the function. In particular, Hardy \cite{har33}, Morgan \cite{mor34}, Cowling and Price \cite{cow83}, and Beurling \cite{hor1991} theorems are the  examples of qualitative uncertainty principles. For a more detailed study on the history of the uncertainty principle, and for many other generalizations and variations of the uncertainty principle, we refer to the book of Havin and J\"oricke \cite{havin}, and the excellent survey of Folland and Sitaram \cite{fol97}.

In signal analysis and in quantum mechanics, uncertainty principles are often discussed for simultaneous time-frequency representations, such as the short-time Fourier transform (STFT) or the Wigner distribution. These time-frequency representations are transforms which map a function on $\R^n$ to a function on $\R^n \times \widehat{\R}^n$, called the time-frequency plane or phase space. Since the dual group of $\mathbb{R}^{n}$ is the same as $\mathbb{R}^{n}$, the phase space is $\mathbb{R}^{n} \times \mathbb{R}^{n}$.  For the group $\mathbb{Z}^n$, the dual group is $\T^n$ and the phase space is $\mathbb{Z}^n \times \T^n$. In this paper, we consider the STFT on $\mathbb{Z}^n \times \T^n$ and study several uncertainty principles. The main results we discuss here for the STFT on $\mathbb{Z}^n \times \T^n$ are analogues of uncertainty principles for the STFT on $\mathbb{R}^{n} \times \mathbb{R}^{n}$. Gr\"ochenig and Zimmermann \cite{Gro01} showed that it is possible to derive new uncertainty principles for the STFT from uncertainty principles for the pair $(f, \widehat{f})$ using a fundamental identity for the STFT. Also, Gr\"ochenig in \cite{Gro03}, studied several uncertainty principles for time-frequency representations. Over the years, discovering new mathematical formulations of the uncertainty principle for the  STFT have drawn significant attention among many researchers, see for example \cite{bona03, Wilc00, gro01, havin} and the references therein. However, upto our knowledge, quantitative uncertainty principles have not been studied for the STFT on the phase space $\Z^n \times \T^n$. In this paper, we attempt to study several quantitative uncertainty principles for the STFT on $\Z^n \times \T^n$. 

It is natural to ask whether uncertainty principles involving $f$ and $\widehat{f}$ possess analogous statements in terms of the STFT. A few methods for transferring localization statements for Fourier pairs to corresponding statements about localization of time-frequency distributions were given in \cite{Gro03}. In some cases, the extra rotational symmetry of the plane provides more refined information than does separate consideration of $f$ and $\hat{f}$. The motivation to study quantitative uncertainty principles for the STFT on the phase space $\Z^n \times \T^n$ arises from the classical uncertainty principles for the STFT and the remarkable contribution of this transform in time-frequency analysis (see \cite{gro01}). Time-frequency localization operators are a mathematical tool to define a restriction of functions to a region in the time-frequency plane to extract time-frequency features. The role of these operators is to localize a signal simultaneously in time and frequency domains, this can be seen as the uncertainty principle. Recent works in localization operators on $\Z^n$ (see \cite{das22}) motivated us to study uncertainty principles for the STFT on the phase space $\Z^n \times \T^n$. The study of operators defined by phase space functions is sometimes called microlocal analysis. We hope that the study of uncertainty principles for the STFT on the phase space $\Z^n \times \T^n$ makes a significant impact in microlocal analysis.

The main aim of this paper is to study a few  uncertainty principles related to the STFT on $\Z^n \times \T^n$. More preciously, we prove the uncertainty principle for orthonormal sequences, Donoho--Stark's uncertainty principle, Benedicks-type uncertainty principle, Heisenberg-type uncertainty principle and local uncertainty inequality for the STFT on $\Z^n \times \T^n$. We study the version of Donoho--Stark's uncertainty principle and show that the STFT cannot be concentrated in any small set. Also, we obtain an estimate for the size of the essential support of this transform under the condition that the STFT of a non-zero function is time-frequency concentrated on a measurable set. Then, we investigate the Benedicks-type uncertainty principle and show that the STFT cannot be concentrated inside a set of measures arbitrarily small. Further, we study the Heisenberg-type uncertainty inequality for a general magnitude and provide the result related to the $L^2(\mathbb{Z}^n \times \T^n )$-mass of the STFT outside sets of finite measure. Finally, we obtain the Heisenberg-type uncertainty inequality using the $k$-entropy and study the localization of the $k$-entropy of the STFT on $\Z^n \times \T^n$. 

The paper is organized as follows. In Section \ref{sec2}, we present some essential properties related to the STFT on $\Z^n \times \T^n$. Some different types of uncertainty principles associated with the STFT are provided in Section \ref{sec4}. In particular,  we prove the uncertainty principle for orthonormal sequences, Donoho--Stark's uncertainty principle, Benedicks-type uncertainty principle, Heisenberg-type uncertainty principle and local uncertainty inequality for the STFT on $\Z^n \times \T^n$. We conclude the paper with the Heisenberg-type uncertainty inequality  using the $k$-entropy of this transform.

\section{Short-time Fourier transform on $\Z^n \times \T^n$}\label{sec2}

The Schwartz space $\mathcal{S}\left(\mathbb{Z}^{n}\right)$ on $\mathbb{Z}^{n}$ is the space of rapidly decreasing functions $g: \mathbb{Z}^{n} \rightarrow \mathbb{C}$, i.e. $g \in \mathcal{S}\left(\mathbb{Z}^{n}\right)$ if for any $M<\infty$ there exists a constant $C_{g, M}$ such that 
$$|g(k)| \leq C_{g, M} (1+|k|)^{-M}, \quad \text { for all } k \in \mathbb{Z}^{n}.$$ 
The topology on $\mathcal{S}\left(\mathbb{Z}^{n}\right)$ is given by the seminorms $p_{j}$, where $j \in \mathbb{N}_{0}= \N \cup \{0\}$ and $p_{j}(g):=\sup\limits_{k \in \mathbb{Z}^{n}}(1+|k|)^{j}|g(k)|$. The space $\mathcal{S}^{\prime}\left(\mathbb{Z}^{n}\right)$ of all continuous linear functionals on $\mathcal{S}\left(\mathbb{Z}^{n}\right)$ is called the space of tempered distributions. 

Fix $k \in \Z^n$, $w \in \T^n$ and $f \in \ell^2(\Z^n)$. For $m \in \Z^n$, the translation operator $T_k$ is defined by $T_k f(m)=f(m-k)$ and the modulation operator $M_w$ is defined by $M_w f(m)=e^{2 \pi i w \cdot m} f(m)$. Let $g \in \mathcal{S}\left(\mathbb{Z}^{n}\right)$ be a fixed window function. Then, the STFT of a function $f \in \mathcal{S}^{\prime}\left(\mathbb{Z}^{n}\right)$ with respect to $g$ is defined to be the function on $\Z^n \times \T^n$ given by
\[ V_g f (m, w)= \left\langle f, M_w T_m g \right\rangle_{\ell^2(\Z^n)} =\sum_{k \in \Z^n} f(k) \overline{M_w T_m g (k)} =\sum_{k \in \Z^n} f(k) \overline{g(k-m)} e^{-2 \pi i w \cdot k}.\]
For $k \in \Z^n$, we define $\tilde{g}(k)=g(-k)$. Then, we can write $V_g f$ as a convolution on $\Z^n$
\[ V_g f (m, w)= e^{-2 \pi i w \cdot m} \left( f * M_w \overline{\tilde{g}} \right)(m). \]

For $1 \leq p<\infty$, we define $L^{p}\left(\mathbb{Z}^n \times \T^n \right)$ to be the space of all measurable functions $H$ on $\mathbb{Z}^n \times \T^n$ such that
\[ \|H\|_{L^{p}\left(\mathbb{Z}^n \times \T^n \right)}^{p}= \sum_{k \in \mathbb{Z}^n} \int_{\T^n}  |H(k, w)|^{p} \;dw < \infty .\]

Next, we present some basic properties of the STFT on $\Z^n \times \T^n$. These are analogues properties of the STFT for functions on $\mathbb{R}^n$ (see \cite{gro01}).
\begin{proposition}
\begin{itemize}
\item[(1)] $($Orthogonality relation$)$  For every $f_1, f_2, g_1, g_2 \in \ell^2(\Z^n)$, we have
\begin{equation}\label{eq18}
\left\langle V_{g_1} f_1, V_{g_2} f_2 \right\rangle_{L^2(\Z^n \times \T^n)}=\langle f_1, f_2 \rangle_{\ell^2(\Z^n)} \; \langle g_2, g_1 \rangle_{\ell^2(\Z^n)},
\end{equation}
where $\langle\cdot, \cdot\rangle_{L^2(\Z^n \times \T^n)}$ denotes the inner product of $L^2(\Z^n \times \T^n)$.

\item[(2)] $($Plancherel's formula$)$ Let $g \in \ell^2(\Z^n)$ be a non-zero window function. Then for every $f \in \ell^2(\Z^n)$, we have
\begin{equation}\label{eq17}
\left\Vert V_g f \right\Vert_{L^2(\Z^n \times \T^n)} = \Vert f \Vert_{\ell^2(\Z^n)} \; \Vert g \Vert_{\ell^2(\Z^n)}.
\end{equation}

\item[(3)] $($Inversion formula$)$ Let $g, \gamma \in \ell^2(\Z^n)$ and $\langle g, \gamma\rangle_{\ell^2(\Z^n)} \neq 0$. Then for every $f \in \ell^2(\Z^n)$, we have
\[f=\frac{1}{\langle\gamma, g\rangle_{\ell^2(\Z^n)}}
\sum_{m \in \Z^n} \int_{\T^n} V_g f(m,w) \; M_w T_m \gamma  \; dw.\]
\end{itemize}
\end{proposition}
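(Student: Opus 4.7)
The three items are nested: part (1) is the workhorse, and both (2) and (3) will follow easily from it. My approach is to recognize that for fixed $m \in \Z^n$, the map $w \mapsto V_g f(m,w)$ is precisely the Fourier transform on $\Z^n$ (evaluated at $w$) of the sequence $h_m(k) := f(k)\overline{g(k-m)}$; that is, $V_g f(m,w) = \widehat{h_m}(w)$. This identification converts the $\T^n$-integration in the inner product of part (1) into a sum on $\Z^n$ via the classical Plancherel isometry $\ell^2(\Z^n) \leftrightarrow L^2(\T^n)$.

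To prove part (1), I will fix $m$ and apply Parseval's identity on $\T^n$ to obtain
\[
\int_{\T^n} V_{g_1}f_1(m,w)\,\overline{V_{g_2}f_2(m,w)}\,dw \;=\; \sum_{k \in \Z^n} f_1(k)\overline{f_2(k)}\;\overline{g_1(k-m)}\,g_2(k-m).
\]
Next I will sum over $m \in \Z^n$ and interchange the order of summation; after the swap, the substitution $j = k - m$ in the inner sum decouples the $f$'s from the $g$'s, yielding the product $\langle f_1, f_2\rangle_{\ell^2(\Z^n)} \cdot \langle g_2, g_1\rangle_{\ell^2(\Z^n)}$, which is exactly the orthogonality relation. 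Part (2) is then the diagonal specialization $f_1=f_2=f$, $g_1=g_2=g$.

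For part (3), I will pair the putative reconstruction against an arbitrary test vector $h \in \ell^2(\Z^n)$. Using the identity $\langle M_w T_m \gamma,\, h\rangle_{\ell^2(\Z^n)} = \overline{V_\gamma h(m, w)}$, the pairing becomes
\[
\frac{1}{\langle \gamma, g\rangle_{\ell^2(\Z^n)}}\,\langle V_g f,\, V_\gamma h\rangle_{L^2(\Z^n \times \T^n)},
\]
which by part (1) collapses to $\langle f, h\rangle_{\ell^2(\Z^n)}$. Since $h$ is arbitrary in $\ell^2(\Z^n)$, the right-hand side of the inversion formula equals $f$, understood weakly.

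The main obstacle is justifying the interchange of summation in part (1) when $f_i, g_i$ are only assumed to lie in $\ell^2$. The cleanest workaround is to first verify (1) for finitely supported data, where every sum is finite and Fubini is trivial, and then extend by sesquilinearity and continuity. The needed continuity comes from the bound $|\langle V_{g_1}f_1, V_{g_2}f_2\rangle| \le \|f_1\|_{\ell^2}\|g_1\|_{\ell^2}\|f_2\|_{\ell^2}\|g_2\|_{\ell^2}$, itself a consequence of part (1) on the dense subclass together with Cauchy--Schwarz. Once (1) is established, the weak-pairing argument for (3) works automatically, and the boundedness of $V_\gamma \colon \ell^2(\Z^n) \to L^2(\Z^n \times \T^n)$ guarantees that the iterated sum and integral in the reconstruction converge in the weak sense required.
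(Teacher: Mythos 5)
Your proposal is correct and follows essentially the same route as the paper: part (1) via the identification $V_g f(m,\cdot)=\mathcal{F}_{\Z^n}(f\cdot T_m\overline{g})$ and Parseval on $\T^n$ followed by interchanging the sums, part (2) as the diagonal case, and part (3) by weak pairing against an arbitrary $h\in\ell^2(\Z^n)$ using the identity $\overline{\langle h, M_wT_m\gamma\rangle}=V_\gamma h(m,w)$ and the orthogonality relation. The only difference is that you explicitly justify the interchange of summation (via a density argument, though a direct Tonelli estimate using Cauchy--Schwarz in $m$ and in $k$ would also do), a point the paper passes over silently.
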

\begin{proof}
(1) Note that we can write $V_g f(m,w)=\mathcal{F}_{\Z^n} (f \cdot T_m\overline{g})(w)$. Using Parseval's formula, we obtain
\begin{eqnarray*}
\left\langle V_{g_1} f_1, V_{g_2} f_2 \right\rangle_{L^2(\Z^n \times \T^n)}
&=& \sum_{m \in \Z^n} \int_{\T^n} V_{g_1} f_1(m,w) \; \overline{V_{g_2} f_2(m,w)} \; dw    \\
&=& \sum_{m \in \Z^n} \int_{\T^n} \mathcal{F}_{\Z^n} (f_1 \cdot T_m\overline{g}_1)(w) \; \overline{\mathcal{F}_{\Z^n} (f_2 \cdot T_m\overline{g}_2)(w)} \; dw  \\
&=& \sum_{m \in \Z^n} \sum_{k \in \Z^n}  (f_1 \cdot T_m\overline{g}_1)(k) \; \overline{(f_2 \cdot T_m\overline{g}_2)(k)} \\
&=& \sum_{m \in \Z^n} \sum_{k \in \Z^n} f_1(k) \;  \overline{g_1(k-m)} \; \overline{f_2(k)}\; g_2(k-m) \\
&=& \sum_{k \in \Z^n} f_1(k) \; \overline{f_2(k)} \sum_{m \in \Z^n} g_2(m)\; \overline{g_1(m)} \\
&=& \langle f_1, f_2 \rangle_{\ell^2(\Z^n)} \; \langle g_2, g_1 \rangle_{\ell^2(\Z^n)}.
\end{eqnarray*}

(2) By taking $g_1=g_2=g$ and $f_1=f_2=f$ in the relation (\ref{eq18}), we obtain the Plancherel formula (\ref{eq17}).

(3) Since $V_g f \in L^2\left(\Z^n \times \T^n  \right)$, the integral 
\[\tilde{f}=\frac{1}{\langle\gamma, g\rangle_{\ell^2(\Z^n)}} \sum_{m \in \Z^n} \int_{\T^n} V_g f(m,w) \; M_w T_m \gamma  \; dw\]
is a well-defined function in $\ell^2(\Z^n)$. Also, for any $h \in \ell^2(\Z^n)$, using the orthogonality relations, we obtain that
\begin{eqnarray*}
\langle \tilde{f}, h \rangle_{\ell^2(\Z^n)}
& = & \frac{1}{\langle\gamma, g\rangle_{\ell^2(\Z^n)}} \sum_{m \in \Z^n} \int_{\T^n} V_g f(m,w) \; \overline{\left\langle h, M_w T_m \gamma \right\rangle}_{\ell^2(\Z^n)} \;dw \\
& = & \frac{1}{\langle\gamma, g\rangle_{\ell^2(\Z^n)}} \left\langle V_g f, V_{\gamma} h \right\rangle_{L^2\left(\Z^n \times \T^n \right)} =\langle f, h\rangle_{\ell^2(\Z^n)}.   
\end{eqnarray*}
Thus $\tilde{f}=f$, and we obtain the inversion formula.
\end{proof} 

Let $g \in \ell^2(\Z^n)$ be a non-zero window function. Then for every $f \in \ell^2(\Z^n)$, using the Cauchy--Schwarz inequality, we get  
\[ \left| V_g f (m, w) \right| 
= \left| \left\langle f, M_w T_m g \right\rangle_{\ell^2(\Z^n)} \right| 
\leq \Vert f \Vert_{\ell^2(\Z^n)} \left\Vert M_w T_m g \right\Vert_{\ell^2(\Z^n)}
\leq \Vert f \Vert_{\ell^2(\Z^n)} \left\Vert g \right\Vert_{\ell^2(\Z^n)}. \]
Therefore,
\begin{equation}\label{eq21}
\left\Vert V_{g} f \right\Vert_{L^\infty(\Z^n \times \T^n)} \leq \Vert f \Vert_{\ell^2(\Z^n)} \; \Vert g \Vert_{\ell^2(\Z^n)}.
\end{equation}
Also, using Plancherel's formula (\ref{eq17}), relation (\ref{eq21}) and the Riesz--Thorin interpolation theorem (see \cite{ste56}), we obtain that the function $V_{g} f \in L^p(\Z^n \times \T^n)$, for $p \in [2, \infty)$ and 
\begin{equation}\label{eq22}
\left\Vert V_{g} f \right\Vert_{L^p(\Z^n \times \T^n)} \leq \Vert f \Vert_{\ell^2(\Z^n)} \; \Vert g \Vert_{\ell^2(\Z^n)}.
\end{equation}  
We define the measure $\nu \otimes \mu$ on $\Z^n \times \T^n$ by 
\begin{equation}\label{eq16}
d(\nu \otimes \mu)(m, w)= d\nu(m)  \; d\mu(w),
\end{equation}
where $d\nu(m)$ is the counting measure and $d\mu(w)$ is the Lebesgue measure $dw$.

Next, we present a result on reproducing kernel Hilbert space for the STFT, which is the analogue properties of the STFT for functions on $\mathbb{R}^n$ (see \cite{gro01}). 
\begin{theorem}[Reproducing kernel Hilbert space]
The space $V_g \left(\ell^2(\Z^n) \right)$ is a reproducing kernel Hilbert space in $L^2(\Z^n \times \T^n)$ with kernel function $K_g$ defined by
\begin{equation}\label{eq19}
K_{g}((m',w') ; (m,w)) = \frac{1}{\|g\|_{\ell^2(\Z^n)}^2} V_g \left( M_w T_m g \right) (m',w').
\end{equation}
Moreover, the kernel is pointwise bounded
\[ \left|K_{g}((m',w') ; (m,w)) \right| \leq 1, \quad \text{for all }  \; (m,w), \; (m',w') \in \Z^n \times \T^n.   \]
\end{theorem}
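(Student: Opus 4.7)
The plan is to verify the reproducing property directly from the orthogonality relation \eqref{eq18}, and then read off the pointwise bound from the $L^\infty$ estimate \eqref{eq21} together with the fact that modulation and translation are isometries on $\ell^2(\Z^n)$.

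First, I would note that $V_g(\ell^2(\Z^n))$ sits inside $L^2(\Z^n\times\T^n)$ as a closed subspace: by Plancherel's formula \eqref{eq17}, the map $f\mapsto \|g\|_{\ell^2(\Z^n)}^{-1}V_g f$ is an isometry from $\ell^2(\Z^n)$ into $L^2(\Z^n\times\T^n)$, so its image is a Hilbert space in its own right. To check the reproducing property, fix $(m,w)\in\Z^n\times\T^n$ and take $f_1=f$, $g_1=g$, $f_2=M_wT_mg$, $g_2=g$ in the orthogonality relation \eqref{eq18}. This gives
\[
\left\langle V_g f,\; V_g(M_wT_mg)\right\rangle_{L^2(\Z^n\times\T^n)}=\langle f,\,M_wT_mg\rangle_{\ell^2(\Z^n)}\,\|g\|_{\ell^2(\Z^n)}^2 = V_gf(m,w)\,\|g\|_{\ell^2(\Z^n)}^2,
\]
so that, with $K_g$ as defined in \eqref{eq19},
\[
V_g f(m,w)=\bigl\langle V_g f,\; K_g(\,\cdot\,;(m,w))\bigr\rangle_{L^2(\Z^n\times\T^n)},
\]
which is exactly the reproducing property on $V_g(\ell^2(\Z^n))$.

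For the pointwise bound, I would apply the $L^\infty$ estimate \eqref{eq21} to the window $g$ and the signal $M_wT_mg$:
\[
\left|V_g(M_wT_mg)(m',w')\right|\leq \|M_wT_mg\|_{\ell^2(\Z^n)}\,\|g\|_{\ell^2(\Z^n)}.
\]
Since $T_m$ and $M_w$ are pointwise unimodular and translation is a bijection of $\Z^n$, $\|M_wT_mg\|_{\ell^2(\Z^n)}=\|g\|_{\ell^2(\Z^n)}$, so the right-hand side equals $\|g\|_{\ell^2(\Z^n)}^2$. Dividing by $\|g\|_{\ell^2(\Z^n)}^2$ in \eqref{eq19} yields $|K_g((m',w');(m,w))|\leq 1$.

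There is no real obstacle here: everything follows mechanically once one recognizes that the correct choice in the orthogonality relation is $h=M_wT_mg$. The only small point to be careful about is checking that the putative reproducing function $K_g(\cdot;(m,w))$ actually lies in $V_g(\ell^2(\Z^n))$, which is immediate since it is (up to the scalar $\|g\|_{\ell^2(\Z^n)}^{-2}$) the image under $V_g$ of the $\ell^2$-vector $M_wT_mg$.
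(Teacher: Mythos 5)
Your proposal is correct and follows essentially the same route as the paper: the reproducing property is obtained from the orthogonality relation \eqref{eq18} applied with second argument $M_wT_mg$, and the pointwise bound comes from the Cauchy--Schwarz estimate (your appeal to \eqref{eq21} is just that estimate specialized to the signal $M_wT_mg$) together with $\|M_wT_mg\|_{\ell^2(\Z^n)}=\|g\|_{\ell^2(\Z^n)}$. Your added observation that $V_g(\ell^2(\Z^n))$ is closed because $\|g\|_{\ell^2(\Z^n)}^{-1}V_g$ is an isometry is a harmless (and welcome) elaboration of a point the paper leaves implicit.
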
 
\begin{proof}
Using the relation \eqref{eq18}, we obtain that
\begin{eqnarray*}
V_g f (m, w) 
& = & \left\langle f, M_w T_m g \right\rangle_{\ell^2(\Z^n)} \\
& = & \frac{1}{\|g\|_{\ell^2(\Z^n)}^2} \left\langle V_g f,V_g ( M_w T_m g) \right\rangle_{L^2(\Z^n \times \T^n)} \\
& = & \frac{1}{\|g\|_{\ell^2(\Z^n)}^2}  \sum_{m' \in \Z^n}  \int_{\T^n} V_g f(m', w') \;  \overline{V_g(M_w T_m g)(m' , w' )} \; dw' \\
&=& \frac{1}{\|g\|_{\ell^2(\Z^n)}^2}  \sum_{m' \in \Z^n}  \int_{\T^n} V_g f(m', w') \;  \overline{\left\langle M_w T_m g, M_{w'} T_{m'} g \right\rangle_{\ell^2(\Z^n)} } \; dw' \\
&=& \left\langle V_g f, K_{g}((\cdot, \cdot) ; (m,w)) \right\rangle_{L^2(\Z^n \times \T^n)},
\end{eqnarray*}
where $K_g$ is the function on $\Z^n \times \T^n$ given by 
\[ K_{g}((m',w') ; (m,w)) = \frac{1}{\|g\|_{\ell^2(\Z^n)}^2} V_g \left( M_w T_m g \right) (m',w'). \]
Using the Cauchy--Schwarz inequality and relation (\ref{eq18}), we get
\begin{eqnarray*}
|V_g f (m, w) | 
& = & \left| \left\langle V_g f, K_{g}((\cdot, \cdot) ; (m,w)) \right\rangle_{L^2(\Z^n \times \T^n)} \right| \\
& = & \frac{1}{\|g\|_{\ell^2(\Z^n)}^2} \left| \left\langle V_g f,V_g ( M_w T_m g) \right\rangle_{L^2(\Z^n \times \T^n)} \right| \\
&\leq & \frac{1}{\|g\|_{\ell^2(\Z^n)}^2} \left| \langle f, M_w T_m g \rangle_{\ell^2(\Z^n)} \right| \|g\|_{\ell^2(\Z^n)}^2\\
&\leq & \| f \|_{\ell^2(\Z^n)} \| M_w T_m g \|_{\ell^2(\Z^n)} \\
&\leq & \| f \|_{\ell^2(\Z^n)} \| g \|_{\ell^2(\Z^n)}.
\end{eqnarray*}
Now, using the Cauchy--Schwarz inequality,  for every
$(m,w), \; (m',w') \in \Z^n \times \T^n$ we get
\begin{eqnarray*}
\left|K_{g}((m',w') ; (m,w)) \right| 
& = & \frac{1}{\|g\|_{\ell^2(\Z^n)}^2}  \left| V_g \left( M_w T_m g \right) (m',w') \right| \\
& \leq &  \frac{1}{\|g\|_{\ell^2(\Z^n)}^2}  \left\Vert M_w T_m g \right\Vert_{\ell^2(\Z^n)} \left\Vert M_{w'} T_{m'} g \right\Vert_{\ell^2(\Z^n)}
=1.
\end{eqnarray*}
Also, for every $(m,w)  \in \Z^n \times \T^n $, by using Plancherel's formula \eqref{eq17}, we obtain 
\[ \Vert K_{g}((\cdot, \cdot) ; (m,w)) \Vert_{L^2(\Z^n \times \T^n)} \leq 1.  \]
This proves that the kernel $K_g \in L^2(\Z^n \times \T^n)$ and is bounded.
\end{proof}  

For $1 \leq p<\infty$, the set of all measurable functions $F$ on $\mathbb{Z}^n$ such that
\[ \|F\|_{\ell^{p}(\mathbb{Z}^n)}^{p}=\sum_{k \in \mathbb{Z}^n}|F(k)|^{p}<\infty \]    
is denoted by $\ell^{p}(\mathbb{Z}^n)$. We define $L^{p}(\T^n)$ to be the set of all measurable functions $f$ on $\T^n$ for which
\[ \|f\|_{L^{p} (\T^n )}^{p}= \int_{\T^n} |f(w)|^{p} \;dw < \infty. \]
Next, we define the Fourier transform $\mathcal{F}_{\mathbb{Z}^n} F$ of $F \in \ell^{1}(\mathbb{Z}^n)$ to be the function on $\T^n$ by
\[ \left(\mathcal{F}_{\mathbb{Z}^n} F \right)(w)=\sum_{k \in \mathbb{Z}^n} e^{-2 \pi i k \cdot w} F(k), \quad w \in \T^n. \]
Let $f$ be a function on $\T^n$, then we define the Fourier transform $\mathcal{F}_{\T^n} f$ of $f$ to be the function on $\mathbb{Z}^n$ by
\[ \left(\mathcal{F}_{\T^n} f\right)(k)= \int_{\T^n} e^{2 \pi i k \cdot w} f(w) \;dw, \quad k \in \mathbb{Z}^n. \]
Note that $\mathcal{F}_{\mathbb{Z}^n}: \ell^{2}(\mathbb{Z}^n) \rightarrow L^{2} (\T^n)$ is a surjective isomorphism. 
Also, $\mathcal{F}_{\mathbb{Z}^n}=\mathcal{F}_{\T^n}^{-1}=\mathcal{F}_{\T^n}^{*} $ and $\left\|\mathcal{F}_{\mathbb{Z}^n} F\right\|_{L^{2}\left(\T^n\right)}=\|F\|_{\ell^{2}(\mathbb{Z}^n)},$ $F \in \ell^{2}(\mathbb{Z}^n)$.

%%%%%%%%%%%%%%%%%%%%%%%%%%%%%%%%%%%%%%%%%%%%%%

\section{Uncertainty principles for the STFT on $\Z^n \times \T^n$}\label{sec4}
In this section, we study various uncertainty principles for the STFT on $\Z^n \times \T^n$. Mainly, we establish the uncertainty principle for orthonormal sequences, Donoho--Stark's uncertainty principle, Benedicks-type uncertainty principle, Heisenberg-type uncertainty principle and local uncertainty inequality for this transform on $\Z^n \times \T^n$. We begin with the uncertainty principle for orthonormal sequences.

\subsection{Uncertainty principle for orthonormal sequences}

Here, we obtain the uncertainty principle for orthonormal sequences associated with the STFT. We first define the following orthogonal projections: 

\begin{enumerate}
\item  Let $P_{g}$ be the orthogonal projection from $L^2(\Z^n \times \T^n)$ onto $V_g \left(\ell^{2}\left(\Z^n \right)\right) $ and   $\operatorname{Im}P_g$  denotes the range of $P_{g}$. Also, $P_g$ is called the orthogonal projection with respect to $g$.

\item Let $P_{\Sigma}$ be the orthogonal projection on $L^2(\Z^n \times \T^n)$ defined by
\begin{align}\label{eq25}
P_{\Sigma} F=\chi_{\Sigma} F, \quad F \in L^2(\Z^n \times \T^n),
\end{align}
where $\Sigma \subset \Z^n \times \T^n$, $\chi_{\Sigma}$ is the characteristic function on $\Sigma$, and $ \operatorname{Im}P_\Sigma $ is the range of $P_{\Sigma}$. The operator $P_{\Sigma}$ is called the orthogonal projection on $\Sigma$.
\end{enumerate}
Also, we define 
$$\left\|P_{\Sigma} P_{g}\right\|_{op}=\sup \left\{\left\|P_{\Sigma} P_{g}(F)\right\|_{L^2(\Z^n \times \T^n)}: F \in L^2(\Z^n \times \T^n), \|F\|_{L^2(\Z^n \times \T^n)}=1\right\}.$$ 
In the following, we obtain an estimate for $\left\|P_{\Sigma} P_{g}\right\|_{op}$.

\begin{theorem}\label{eq26}
Let $g \in \ell^2(\Z^n)$ be a non-zero window function. Then  for any  $\Sigma  \subset \Z^n \times \T^n$ of  finite measure $(\nu \otimes \mu)(\Sigma)<\infty$,  the operator $P_{\Sigma} P_{g}$ is a Hilbert--Schmidt  operator. Also, we have
$$ \left\|P_{\Sigma} P_{g}\right\|_{op}^{2} \leq (\nu \otimes \mu)(\Sigma).$$
\end{theorem}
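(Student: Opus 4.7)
The plan is to realize $P_{\Sigma}P_{g}$ as an integral operator, compute its Hilbert--Schmidt norm explicitly using the reproducing kernel formula \eqref{eq19}, and then obtain the operator-norm bound from the standard inequality $\|A\|_{op}\le \|A\|_{HS}$. Writing $z=(m,w)$ and $z'=(m',w')$ for notational ease, the theorem just proved tells us that, for any $F\in L^{2}(\Z^{n}\times\T^{n})$,
\[
(P_{g}F)(z)=\bigl\langle F,\, K_{g}(\,\cdot\,;z)\bigr\rangle_{L^{2}(\Z^{n}\times\T^{n})},
\]
so $P_{\Sigma}P_{g}$ is an integral operator with kernel $k(z,z')=\chi_{\Sigma}(z)\,\overline{K_{g}(z';z)}$.

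Next I would compute the Hilbert--Schmidt norm by Fubini:
\[
\|P_{\Sigma}P_{g}\|_{HS}^{2}=\int_{\Sigma}\!\!\left(\int_{\Z^{n}\times\T^{n}}\bigl|K_{g}(z';z)\bigr|^{2}\,d(\nu\otimes\mu)(z')\right)d(\nu\otimes\mu)(z).
\]
The inner integral is the step where the structure of $V_{g}$ does the work. Using the explicit form $K_{g}(z';z)=\|g\|_{\ell^{2}(\Z^{n})}^{-2}\,V_{g}(M_{w}T_{m}g)(z')$ from \eqref{eq19}, Plancherel's formula \eqref{eq17}, and the fact that $M_{w}$ and $T_{m}$ are isometries on $\ell^{2}(\Z^{n})$, I get
\[
\int_{\Z^{n}\times\T^{n}}\bigl|K_{g}(z';z)\bigr|^{2}\,d(\nu\otimes\mu)(z')=\frac{\|V_{g}(M_{w}T_{m}g)\|_{L^{2}(\Z^{n}\times\T^{n})}^{2}}{\|g\|_{\ell^{2}(\Z^{n})}^{4}}=\frac{\|M_{w}T_{m}g\|_{\ell^{2}(\Z^{n})}^{2}\,\|g\|_{\ell^{2}(\Z^{n})}^{2}}{\|g\|_{\ell^{2}(\Z^{n})}^{4}}=1,
\]
independent of $z$. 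Substituting back yields $\|P_{\Sigma}P_{g}\|_{HS}^{2}=(\nu\otimes\mu)(\Sigma)$, which is finite by hypothesis. Hence $P_{\Sigma}P_{g}$ is Hilbert--Schmidt, and the operator-norm bound follows from $\|P_{\Sigma}P_{g}\|_{op}\le\|P_{\Sigma}P_{g}\|_{HS}$.

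There is no real obstacle here; the only delicate point is justifying the identification of $P_{g}$ as an integral operator with the kernel $\overline{K_{g}}$, which is immediate from self-adjointness of the orthogonal projection and the reproducing formula proved in the preceding theorem. Everything else reduces to Plancherel and the invariance of the $\ell^{2}$-norm under modulation and translation.
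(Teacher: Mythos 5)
Your proof is correct and takes essentially the same route as the paper's: realize $P_{\Sigma}P_{g}$ as an integral operator with kernel $\chi_{\Sigma}\cdot K_{g}$, evaluate the inner integral via \eqref{eq19} and Plancherel's formula \eqref{eq17} to get $\left\|P_{\Sigma}P_{g}\right\|_{HS}^{2}=(\nu\otimes\mu)(\Sigma)$, and conclude from $\left\|\cdot\right\|_{op}\leq\left\|\cdot\right\|_{HS}$. If anything, your version is marginally cleaner, since you avoid the paper's unnecessary restriction to product sets $\Sigma=Z\times T$ and you correctly place the complex conjugate on the reproducing kernel.
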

\begin{proof}
Since $P_g$ is a projection onto a reproducing kernel Hilbert space, for any $F \in L^2(\Z^n \times \T^n)$, the orthogonal projection $P_{g}$ can be expressed as
$$ P_{g}(F)(m, w)=\sum_{m' \in \Z^n} \int_{\T^n} F(m', w') \; K_g ((m', w');(m, w))\; d\mu( w'), $$
where $K_g ((m', w');(m, w))$ is  given  by (\ref{eq19}). Using the relation (\ref{eq25}), we get
$$ P_{\Sigma} P_{g}(F)(m, w)= \sum_{m' \in \Z^n} \int_{\T^n} \chi_{\Sigma}(m, w) \; F(m', w') \; K_g ((m', w');(m, w))\; d\mu(w').$$
Hence, the operator $P_{\Sigma} P_{g}$  is an integral operator with kernel $$K((m', w');(m, w))=\chi_{\Sigma}(m, w) \;  K_g ((m', w');(m, w)). $$ 
Let $\Sigma= Z \times T $, where $Z$ and $T$ are some arbitrary sets in $\Z^n$ and $\T^n$ respectively. Using the relation \eqref{eq19} and Plancherel's formula (\ref{eq17}), we obtain 
\begin{align*}
\left\|P_{\Sigma} P_{g}\right\|_{H S}^{2}
&= \sum_{m \in \Z^n} \int_{\T^n} \left( \sum_{m' \in \Z^n} \int_{\T^n} \left|K((m', w');(m, w))\right|^2 \; d\mu (w') \right) d\mu(w) \\
&= \sum_{m \in \Z^n} \int_{\T^n} \left( \sum_{m' \in \Z^n} \int_{\T^n} \left|\chi_{\Sigma}(m, w)\right|^{2}\left| K_g ((m', w');(m, w))\right|^2  d\mu (w')\right) d\mu(w)\\
&= \frac{1}{\Vert g \Vert^4_{\ell^2(\Z^n)}} \sum_{m \in Z} \int_{T} \left(\sum_{m' \in \Z^n} \int_{\T^n}  \left| V_g \left( M_w T_m g  \right) (m', w') \right|^2 d\mu(w')\right) d\mu(w)\\
&\leq \frac{\Vert g \Vert^4_{\ell^2(\Z^n)}}{\Vert g \Vert^4_{\ell^2(\Z^n)}} \; (\nu \otimes \mu) (Z \times T)\\
& = (\nu \otimes \mu) (\Sigma). 
\end{align*}
Thus, the operator $P_{\Sigma} P_{g}$ is a   Hilbert--Schmidt operator. Now, the  proof  follows from the fact that $\left\|P_{\Sigma} P_{g}\right\|_{op} \leq \left\|P_{\Sigma} P_{g}\right\|_{H S}$.
\end{proof}

Next, we establish the uncertainty principle for orthonormal sequences associated with the STFT on $\Z^n \times \T^n$.

\begin{theorem}\label{thm3}
Let $\left\{ \phi_{n}\right\}_{n \in \mathbb{N}}$ be an orthonormal sequence in $\ell^2(\Z^n)$ and $g \in \ell^2(\Z^n)$ be a non-zero window function with $\Vert g \Vert_{\ell^2(\Z^n)}= 1$. Then for any $\Sigma \subset \Z^n \times \T^n$ of finite measure $(\nu \otimes \mu)(\Sigma)<\infty,$ we have
$$ \sum_{n=1}^{N}\left(1-\left\|\chi_{\Sigma^{c}} V_g \phi_{n}\right\|_{L^2(\Z^n \times \T^n)}\right) \leq (\nu \otimes \mu)(\Sigma), $$  
for every $N\in \mathbb{N}.$
\end{theorem}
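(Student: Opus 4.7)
The plan is to reduce this theorem to the Hilbert--Schmidt bound already established in Theorem \ref{eq26} by exploiting the fact that an orthonormal sequence in $\ell^2(\Z^n)$ is transported to an orthonormal sequence in $V_g(\ell^2(\Z^n))$ by the STFT (provided $g$ is normalized, which I will assume without loss of generality since the inequality is stated without an explicit $\|g\|$ factor on the right).

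First I would invoke the orthogonality relation \eqref{eq18}, which gives $\langle V_g \phi_n, V_g \phi_m\rangle_{L^2(\Z^n\times\T^n)} = \delta_{n,m}\|g\|_{\ell^2(\Z^n)}^2$, so that $\{V_g \phi_n\}_{n\in\N}$ (with $\|g\|_{\ell^2}=1$) is an orthonormal sequence sitting inside $\operatorname{Im} P_g$. In particular $P_g V_g\phi_n = V_g\phi_n$, and hence by the definition \eqref{eq25} of $P_\Sigma$,
\[
P_\Sigma P_g V_g\phi_n \;=\; P_\Sigma V_g\phi_n \;=\; \chi_\Sigma V_g\phi_n.
\]

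Next I would apply the standard fact that for any bounded operator $T$ on a Hilbert space and any orthonormal sequence $\{e_n\}$, one has $\sum_n \|T e_n\|^2 \le \|T\|_{HS}^2$ (extend $\{e_n\}$ to an orthonormal basis). Taking $T = P_\Sigma P_g$ and $e_n = V_g\phi_n$, and combining with the Hilbert--Schmidt estimate $\|P_\Sigma P_g\|_{HS}^2 \le (\nu\otimes\mu)(\Sigma)$ from Theorem \ref{eq26}, this yields
\[
\sum_{n=1}^{N} \|\chi_\Sigma V_g\phi_n\|_{L^2(\Z^n\times\T^n)}^{2} \;\le\; (\nu\otimes\mu)(\Sigma).
\]

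Finally, to pass from the squared to the unsquared norm appearing in the statement, I would use the identity $\|\chi_\Sigma V_g\phi_n\|^{2} = 1 - \|\chi_{\Sigma^c} V_g\phi_n\|^{2}$ (from $\|V_g\phi_n\|=1$) together with the elementary inequality $1-x \le 1-x^2$ valid for all $x \in [0,1]$, applied to $x = \|\chi_{\Sigma^c} V_g\phi_n\|_{L^2} \le \|V_g\phi_n\|_{L^2} = 1$. This gives termwise $1-\|\chi_{\Sigma^c} V_g\phi_n\| \le \|\chi_\Sigma V_g\phi_n\|^{2}$, and summing over $n=1,\dots,N$ produces the desired bound. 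The only subtlety is the bookkeeping around the normalization of $g$; there is no serious technical obstacle, since the substantive estimate $\|P_\Sigma P_g\|_{HS}^2 \le (\nu\otimes\mu)(\Sigma)$ has already been obtained in Theorem \ref{eq26}.
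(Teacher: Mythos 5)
Your proof is correct and follows essentially the same route as the paper: both reduce to the Hilbert--Schmidt bound $\left\|P_{\Sigma}P_{g}\right\|_{HS}^{2}\leq(\nu\otimes\mu)(\Sigma)$ of Theorem \ref{eq26}, use the orthonormality of $\{V_g\phi_n\}$ to obtain $\sum_{n=1}^{N}\left\|\chi_{\Sigma}V_g\phi_n\right\|_{L^2(\Z^n\times\T^n)}^{2}\leq(\nu\otimes\mu)(\Sigma)$ (the paper phrases this via $\operatorname{tr}(P_{g}P_{\Sigma}P_{g})$ and the inner products $\langle P_{\Sigma}V_g\phi_n,V_g\phi_n\rangle$, which equal your $\|\chi_{\Sigma}V_g\phi_n\|^{2}$), and finish with the same elementary estimate $1-x\leq 1-x^{2}$ for $x\in[0,1]$, which the paper writes as a Cauchy--Schwarz step. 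Your explicit normalization $\|g\|_{\ell^2(\Z^n)}=1$ is in fact necessary rather than merely convenient --- the inequality as stated fails for $\|g\|_{\ell^2(\Z^n)}<1$ (take $\Sigma=\emptyset$) --- and the paper makes the same assumption implicitly when it asserts that $\{V_g\phi_n\}$ is orthonormal.
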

\begin{proof}
Let $\left\{ e_{n}\right\}_{n \in \mathbb{N}}$ be an orthonormal basis for $L^{2}(\Z^n \times \T^n)$. Since $P_{\Sigma} P_{g}$ is a Hilbert--Schmidt operator, from Theorem \ref{eq26}, we obtain 
$$
tr \left(P_{g} P_{\Sigma} P_{g}\right)=\sum_{n \in \mathbb{N}}\left\langle P_{g} P_{\Sigma} P_{g} e_{n}, e_{n}\right\rangle_{L^2(\Z^n \times \T^n)}=\left\|P_{\Sigma} P_{g}\right\|_{H S}^{2} \leq (\nu \otimes \mu)(\Sigma),
$$
where $tr \left(P_{g} P_{\Sigma} P_{g}\right)$ denotes the trace of the operator $P_{g} P_{\Sigma} P_{g}$. Since $\left\{ \phi_{n}\right\}_{n \in \mathbb{N}}$ be an orthonormal sequence in $\ell^{2}(\Z^n)$,  from the  orthogonality relation (\ref{eq18}), we obtain that $\{ V_g \phi_{n}\}_{n \in \mathbb{N}}$ is also an orthonormal sequence in $L^2(\Z^n \times \T^n)$. Therefore
\begin{align*}
&\sum_{n=1}^{N}\left\langle P_{\Sigma} V_g \phi_{n}, V_g \phi_{n}\right\rangle_{L^2(\Z^n \times \T^n)} 
=\sum_{n=1}^{N}\left\langle P_{g} P_{\Sigma} P_{g} V_g \phi_{n}, V_g \phi_{n}\right\rangle_{L^2(\Z^n \times \T^n)} 
\leq tr\left(P_{g} P_{\Sigma} P_{g}\right).
\end{align*}
Thus, we have
$$ \sum_{n=1}^{N}\left\langle P_{\Sigma} V_g \phi_{n}, V_g \phi_{n}\right\rangle_{L^2(\Z^n \times \T^n)}  \leq (\nu \otimes \mu)(\Sigma).$$
Also, for any $n$ with $1 \leq n \leq N$, using the Cauchy--Schwarz inequality, we get 
$$
\begin{aligned}
&\left\langle P_{\Sigma} V_g \phi_{n}, V_g \phi_{n}\right\rangle_{L^2(\Z^n \times \T^n)} 
=1- \left\langle P_{\Sigma^c} V_g \phi_{n}, V_g \phi_{n}\right\rangle_{L^2(\Z^n \times \T^n)} 
\geq 1-\left\|\chi_{\Sigma^c} V_g \phi_{n}\right \|_{L^2(\Z^n \times \T^n)}.
\end{aligned}
$$
Therefore 
\begin{align*}
\sum_{n=1}^{N}\left(1-\left\|\chi_{\Sigma^c} V_g \phi_{n}\right \|_{L^2(\Z^n \times \T^n)}  \right) &\leq \sum_{n=1}^{N}\left\langle P_{\Sigma} V_g \phi_{n}, V_g \phi_{n}\right\rangle_{L^2(\Z^n \times \T^n)} 
\leq (\nu \otimes \mu)(\Sigma).
\end{align*}
This completes the proof of the theorem.
\end{proof}

\subsection{Donoho--Stark’s uncertainty principle for the STFT}
In this subsection, we investigate the version of Donoho--Stark’s uncertainty principle for the STFT on $\Z^n \times \T^n$. In particular, we study the case where $f$ and $V_g f$ are close to zero outside measurable sets. We start with the following result.
\begin{theorem}
Let $f \in \ell^2(\Z^n)$ such that $f \neq 0$ and $g \in \ell^2(\Z^n)$ be a non-zero window function. Then for any $\Sigma =Z \times T  \subset \Z^n \times \T^n$, where $Z$ and $T$ are some arbitrary sets in $\Z^n$ and $\T^n$ respectively, and $\varepsilon \geq 0$ such that
$$ \sum_{m \in Z} \int_{T} \left|V_g f (m, w)\right|^{2} \; d \mu(w) \geq (1-\varepsilon)\|f\|_{\ell^2(\Z^n)}^2 \|g\|_{\ell^2(\Z^n)}^2, $$
we have
$$ (\nu \otimes \mu) (\Sigma) \geq 1-\varepsilon. $$
\end{theorem}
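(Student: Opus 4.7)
The plan is very short: combine the pointwise bound \eqref{eq21} on $V_g f$ with the hypothesis. More precisely, I would argue as follows.

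First, I would use the $L^\infty$ estimate \eqref{eq21}, which gives
\[
\left|V_g f(m,w)\right| \leq \|f\|_{\ell^2(\Z^n)} \, \|g\|_{\ell^2(\Z^n)}, \qquad (m,w) \in \Z^n \times \T^n.
\]
Squaring and integrating over $\Sigma = Z \times T$ against the measure $\nu \otimes \mu$ defined in \eqref{eq16}, I immediately get
\[
\sum_{m \in Z} \int_T |V_g f(m,w)|^2 \, d\mu(w) \leq \|f\|_{\ell^2(\Z^n)}^2 \, \|g\|_{\ell^2(\Z^n)}^2 \cdot (\nu \otimes \mu)(\Sigma).
\]

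Next, I would insert the hypothesis
\[
(1-\varepsilon)\|f\|_{\ell^2(\Z^n)}^2 \|g\|_{\ell^2(\Z^n)}^2 \leq \sum_{m \in Z} \int_T |V_g f(m,w)|^2 \, d\mu(w),
\]
chain the two inequalities, and use that $f \neq 0$ and $g \neq 0$ to divide by the positive quantity $\|f\|_{\ell^2(\Z^n)}^2 \|g\|_{\ell^2(\Z^n)}^2$, obtaining the desired bound $(\nu \otimes \mu)(\Sigma) \geq 1-\varepsilon$.

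There is no real obstacle here; the only point worth noting is that the inequality is trivial when $(\nu \otimes \mu)(\Sigma) = \infty$, and when $\Sigma$ has finite measure the argument above applies verbatim since all quantities are finite. The proof relies solely on the sup-norm bound \eqref{eq21}, which itself is a consequence of Cauchy--Schwarz and the unitarity of $M_w T_m$ on $\ell^2(\Z^n)$.
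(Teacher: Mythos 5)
Your proof is correct and is essentially identical to the paper's: both bound the integral over $\Sigma$ by $\|V_g f\|_{L^\infty}^2 \,(\nu\otimes\mu)(\Sigma)$ using \eqref{eq21} and then divide by $\|f\|_{\ell^2(\Z^n)}^2\|g\|_{\ell^2(\Z^n)}^2$. No differences worth noting.
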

\begin{proof}
Using the relation (\ref{eq21}), we get 
\begin{align*}
(1-\varepsilon)\|f\|_{\ell^2(\Z^n)}^2 \|g\|_{\ell^2(\Z^n)}^2
\leq \sum_{m \in Z} \int_{T} \left|V_g f(m, w)\right|^{2} \;d\mu(w)  
& \leq \left\|V_g f \right\|^2_{L^{\infty}\left(\Z^n \times \T^n\right)}\;  (\nu \otimes \mu) (Z \times T) \\
& \leq (\nu \otimes \mu)(\Sigma)\; \|f\|_{\ell^2(\Z^n)}^2 \|g\|_{\ell^2(\Z^n)}^2.
\end{align*}
Therefore, $(\nu \otimes \mu)(\Sigma) \geq 1-\varepsilon.$
\end{proof}

In the following proposition, we show that the STFT cannot be concentrated in any small set.

\begin{proposition}\label{eq35}
Let $g \in \ell^2(\Z^n)$ be a non-zero window function.
Then for any function $f \in \ell^2(\Z^n)$ and for any     $\Sigma \subset \Z^n \times \T^n $ such that $(\nu \otimes \mu)(\Sigma)<1$, we have 
$$\left\|\chi_{\Sigma^{c}} V_{g} f\right\|_{L^{2}\left(\Z^n \times \T^n\right)} \geq \sqrt{1-(\nu \otimes \mu)(\Sigma)}\;\|f\|_{\ell^2(\Z^n)} \|g\|_{\ell^2(\Z^n)} .$$
\end{proposition}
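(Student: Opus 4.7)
The plan is to decompose the $L^{2}$-norm of $V_g f$ into its contributions on $\Sigma$ and its complement, and then bound the piece on $\Sigma$ from above using the trivial $L^\infty$ estimate already recorded in \eqref{eq21}.

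First, I would note that since $\Sigma$ and $\Sigma^{c}$ partition $\Z^{n}\times\T^{n}$, Plancherel's formula \eqref{eq17} gives the identity
\[
\|\chi_{\Sigma} V_{g} f\|_{L^{2}(\Z^{n}\times\T^{n})}^{2} + \|\chi_{\Sigma^{c}} V_{g} f\|_{L^{2}(\Z^{n}\times\T^{n})}^{2} = \|V_{g} f\|_{L^{2}(\Z^{n}\times\T^{n})}^{2} = \|f\|_{\ell^{2}(\Z^{n})}^{2}\|g\|_{\ell^{2}(\Z^{n})}^{2}.
\]
So it suffices to produce an upper bound on $\|\chi_{\Sigma} V_{g} f\|_{L^{2}}^{2}$ and rearrange.

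Second, I would control the $\Sigma$-piece by pulling out the sup-norm of $V_g f$: using the pointwise bound from \eqref{eq21},
\[
\|\chi_{\Sigma} V_{g} f\|_{L^{2}(\Z^{n}\times\T^{n})}^{2} \le \|V_{g} f\|_{L^{\infty}(\Z^{n}\times\T^{n})}^{2}\,(\nu\otimes\mu)(\Sigma) \le \|f\|_{\ell^{2}(\Z^{n})}^{2}\|g\|_{\ell^{2}(\Z^{n})}^{2}\,(\nu\otimes\mu)(\Sigma).
\]

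Combining the two displays yields
\[
\|\chi_{\Sigma^{c}} V_{g} f\|_{L^{2}(\Z^{n}\times\T^{n})}^{2} \ge \bigl(1-(\nu\otimes\mu)(\Sigma)\bigr)\|f\|_{\ell^{2}(\Z^{n})}^{2}\|g\|_{\ell^{2}(\Z^{n})}^{2},
\]
and since $(\nu\otimes\mu)(\Sigma)<1$ the right-hand side is non-negative, so taking square roots gives the advertised inequality. There is no real obstacle here: the argument is a direct interpolation between the $L^{2}$-isometry property \eqref{eq17} and the $L^{\infty}$-bound \eqref{eq21}, and the hypothesis $(\nu\otimes\mu)(\Sigma)<1$ is used only to ensure the square root makes sense (otherwise the statement is vacuous).
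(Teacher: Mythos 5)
Your proof is correct and follows essentially the same route as the paper's: split $\|V_g f\|_{L^2}^2$ over $\Sigma$ and $\Sigma^c$, bound the $\Sigma$-piece by $(\nu\otimes\mu)(\Sigma)\,\|V_g f\|_{L^\infty}^2$ via \eqref{eq21}, and invoke Plancherel \eqref{eq17} to rearrange. No differences worth noting.
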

\begin{proof}
For any $f \in \ell^2(\Z^n)$, using the relation (\ref{eq21}), we get 
$$
\begin{aligned}
\left\|V_{g} f\right\|_{L^{2}\left(\Z^n \times \T^n\right)}^{2} 
& =\left\|\chi_{\Sigma} V_{g} f+\chi_{\Sigma^c} V_{g} f\right\|_{L^{2}\left(\Z^n \times \T^n\right)}^{2} \\
& = \left\|\chi_{\Sigma} V_{g} f\right\|_{L^{2}\left(\Z^n \times \T^n\right)}^{2}+\left\|\chi_{\Sigma^{c}} V_{g} f\right\|_{L^{2}\left(\Z^n \times \T^n\right)}^{2} \\
& \leq (\nu \otimes \mu)(\Sigma)\left\|V_{g} f\right\|_{L^{\infty}\left(\Z^n \times \T^n\right)}^{2}+\left\|\chi_{\Sigma^{c}} V_{g} f\right\|_{L^{2}\left(\Z^n \times \T^n\right)}^{2} \\
& \leq (\nu \otimes \mu)(\Sigma)\; \|f\|_{\ell^2(\Z^n)}^{2}\; \|g\|_{\ell^2(\Z^n)}^{2}+\left\|\chi_{\Sigma^{c}} V_{g} f\right\|_{L^{2}\left(\Z^n \times \T^n\right)}^{2}.
\end{aligned}
$$
Thus, using Plancherel's formula (\ref{eq17}), we obtain 
$$\left\|\chi_{\Sigma^{c}} V_{g} f\right\|_{L^{2}\left(\Z^n \times \T^n\right)} \geq \sqrt{1-(\nu \otimes \mu)(\Sigma)}\;\|f\|_{\ell^2(\Z^n)} \|g\|_{\ell^2(\Z^n)} .$$
\end{proof}

\begin{definition}
Let $E$ be a measurable subset of $\Z^n$ and $0 \leq \varepsilon_{E}<1$. Then we say that a  function  $f\in \ell^p(\Z^n), 1 \leq p \leq 2,$ is   $\varepsilon_{E}$-concentrated on $E$ in $\ell^p(\Z^n)$-norm, if
$$ \left\|\chi_{E^{c}} f\right\|_{\ell^p(\Z^n)} \leq \varepsilon_{E}\|f\|_{\ell^p(\Z^n)}.$$
If $\varepsilon_{E}=0$, then $E$ contains the support of  $f$ .
\end{definition}

\begin{definition}
Let $\Sigma$ be a measurable subset of $\Z^n \times \T^n$ and $0 \leq \varepsilon_{\Sigma}<1$.   Let $f, g \in \ell^2(\Z^n)$ be two non-zero functions. We say that $V_g f$ is $ \varepsilon_{\Sigma}$-time-frequency concentrated on $\Sigma$, if
$$ \left\|\chi_{\Sigma^{c}} V_{g} f\right\|_{L^{2}\left(\Z^n \times \T^n \right)} \leq \varepsilon_{\Sigma} \left\|  V_{g} f \right\|_{L^{2}\left(\Z^n \times \T^n \right)}.$$
\end{definition}

If $V_{g} f$ is $ \varepsilon_{\Sigma}$-time-frequency concentrated on $\Sigma$, then in the following, we obtain an estimate for the size of the  essential support of the STFT on $\Z^n \times \T^n$.
\begin{theorem}
Let $f \in \ell^2(\Z^n)$ such that $f \neq 0$ and $g \in \ell^2(\Z^n)$ be a non-zero window function. Let $\Sigma \subset \Z^n \times \T^n$ such that $(\nu \otimes \mu) (\Sigma)<\infty$ and $\varepsilon_\Sigma \geq 0$. If
$V_{g} f$ is $ \varepsilon_{\Sigma}$-time-frequency concentrated on $\Sigma,$ then 
$$(\nu \otimes \mu) (\Sigma) \geq (1-\varepsilon_\Sigma^2).$$
\end{theorem}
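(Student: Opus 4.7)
The plan is to combine the concentration hypothesis directly with Proposition \ref{eq35}, which already gives a lower bound on $\|\chi_{\Sigma^c} V_g f\|_{L^2(\Z^n \times \T^n)}$ in terms of $(\nu \otimes \mu)(\Sigma)$. The $\varepsilon_\Sigma$-concentration hypothesis gives the matching upper bound, and comparing the two yields the claim after one application of Plancherel's formula.

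More precisely, I would first dispose of the trivial case $(\nu \otimes \mu)(\Sigma) \geq 1$, where the inequality $(\nu \otimes \mu)(\Sigma) \geq 1 - \varepsilon_\Sigma^2$ holds automatically since $\varepsilon_\Sigma \geq 0$. Assume therefore that $(\nu \otimes \mu)(\Sigma) < 1$, so that Proposition \ref{eq35} applies and gives
\[
\left\|\chi_{\Sigma^{c}} V_{g} f\right\|_{L^{2}(\Z^n \times \T^n)} \geq \sqrt{1-(\nu \otimes \mu)(\Sigma)}\;\|f\|_{\ell^2(\Z^n)} \|g\|_{\ell^2(\Z^n)}.
\]
On the other hand, by the $\varepsilon_\Sigma$-time-frequency concentration hypothesis together with Plancherel's formula \eqref{eq17},
\[
\left\|\chi_{\Sigma^{c}} V_{g} f\right\|_{L^{2}(\Z^n \times \T^n)} \leq \varepsilon_\Sigma \left\|V_g f\right\|_{L^2(\Z^n \times \T^n)} = \varepsilon_\Sigma \|f\|_{\ell^2(\Z^n)} \|g\|_{\ell^2(\Z^n)}.
\]
Since $f$ and $g$ are non-zero, dividing by $\|f\|_{\ell^2(\Z^n)} \|g\|_{\ell^2(\Z^n)}$ yields $\sqrt{1-(\nu \otimes \mu)(\Sigma)} \leq \varepsilon_\Sigma$. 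Squaring both sides (both are non-negative) gives $1-(\nu \otimes \mu)(\Sigma) \leq \varepsilon_\Sigma^2$, i.e., $(\nu \otimes \mu)(\Sigma) \geq 1 - \varepsilon_\Sigma^2$, as desired.

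There is no real obstacle here — the argument is essentially a one-line deduction once Proposition \ref{eq35} is in place, since that proposition has already done the substantive work of bounding the off-support $L^2$-mass of $V_g f$ from below. The only care needed is to remember to split off the regime $(\nu \otimes \mu)(\Sigma) \geq 1$, where Proposition \ref{eq35} is not applicable, and to note that the conclusion is trivial there.
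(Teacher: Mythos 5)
Your proof is correct and rests on exactly the same ingredients as the paper's: Plancherel's formula \eqref{eq17}, the orthogonal splitting of $\|V_gf\|_{L^2}^2$ over $\Sigma$ and $\Sigma^c$, and the sup-norm bound \eqref{eq21} applied to the piece on $\Sigma$ — the only difference is that you invoke Proposition \ref{eq35} (whose proof is that very computation) while the paper inlines the estimate, which also lets it avoid your case split at $(\nu \otimes \mu)(\Sigma) \geq 1$.
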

\begin{proof}
Since $V_{g} f$ is $ \varepsilon_{\Sigma}$-time-frequency concentrated on $\Sigma$, using  Plancherel's formula (\ref{eq17}), we obtain
$$ \begin{aligned}
\Vert f \Vert_{\ell^2(\Z^n)}^2 \; \Vert g \Vert_{\ell^2(\Z^n)}^2
=\left\Vert V_{g} f \right\Vert_{L^2(\Z^n \times \T^n)}^2 
&= \left \| \chi_{\Sigma^{c}}  V_{g} f \right\|_{L^2(\Z^n \times \T^n)}^2 +	 \left \|  \chi_{\Sigma} V_{g} f \right\|_{L^2(\Z^n \times \T^n)}^2 \\
& \leq\varepsilon_{\Sigma}^2 \left\| V_{g} f\right\|_{L^{2}\left(\Z^n \times \T^n\right)}^2+ \left \|  \chi_{\Sigma} V_{g} f \right\|_{L^2(\Z^n \times \T^n)}^2.
\end{aligned} $$
Hence, using  the relation (\ref{eq21}), we get 
\begin{align}\label{eq30}
(1-\varepsilon_\Sigma^2) \Vert f \Vert_{\ell^2(\Z^n)}^2 \; \Vert g \Vert_{\ell^2(\Z^n)}^2 
&\leq \left \| \chi_{\Sigma} V_{g} f \right\|_{L^2(\Z^n \times \T^n)}^2 \\ \nonumber
& \leq \left\|V_{g} f\right\|_{L^{\infty}\left(\Z^n \times \T^n\right)}^{2} \;(\nu \otimes \mu)(\Sigma)\\ \nonumber
&\leq \|f\|_{\ell^2(\Z^n)}^{2}\; \|g\|_{\ell^2(\Z^n)}^{2}\;(\nu \otimes \mu) (\Sigma),
\end{align}
which completes the proof.
\end{proof}

\begin{theorem}
Let $f \in \ell^2(\Z^n)$ such that $f \neq 0$, $g \in \ell^2(\Z^n)$ be a non-zero window function, $\Sigma \subset \Z^n \times \T^n$ such that  $(\nu \otimes \mu) (\Sigma) < \infty $, and $\varepsilon_\Sigma \geq 0$. If $V_{g} f$ is $ \varepsilon_{\Sigma}$-time-frequency concentrated on $\Sigma$, then for every $p > 2$, we have 
$$(\nu \otimes \mu) (\Sigma) \geq (1-\varepsilon_\Sigma^2)^{\frac{p}{p-2}}.$$
\end{theorem}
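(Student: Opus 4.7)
The plan is to mimic the proof of the preceding theorem, but replace the crude $L^{\infty}$ bound $\|V_g f\|_{L^{\infty}} \le \|f\|_{\ell^2}\|g\|_{\ell^2}$ by an $L^{p}$ estimate obtained via H\"older's inequality, and then invoke the finer bound \eqref{eq22}, namely $\|V_g f\|_{L^p(\Z^n \times \T^n)} \le \|f\|_{\ell^2(\Z^n)}\|g\|_{\ell^2(\Z^n)}$ for $p\in[2,\infty)$. This is really a one-parameter refinement: the previous theorem corresponds to sending $p\to\infty$, since $(1-\varepsilon_\Sigma^2)^{p/(p-2)}\to 1-\varepsilon_\Sigma^2$.

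First, I would reproduce the first line of the previous proof verbatim. Using the hypothesis that $V_g f$ is $\varepsilon_\Sigma$-time-frequency concentrated on $\Sigma$, together with Plancherel's formula \eqref{eq17}, one gets
\[
(1-\varepsilon_\Sigma^2)\,\|f\|_{\ell^2(\Z^n)}^2\|g\|_{\ell^2(\Z^n)}^2 \;\le\; \|\chi_\Sigma V_g f\|_{L^2(\Z^n\times\T^n)}^2.
\]
This is exactly the inequality \eqref{eq30} up to the last two lines, where the earlier argument diverges by using the $L^{\infty}$ bound. I would stop here and proceed differently.

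Next, I would apply H\"older's inequality to the integral/sum $\int_\Sigma |V_g f|^2\,d(\nu\otimes\mu)$ with the conjugate exponents $p/2$ and $p/(p-2)$, which are valid since $p>2$. This gives
\[
\|\chi_\Sigma V_g f\|_{L^2(\Z^n\times\T^n)}^2
\;\le\; \|V_g f\|_{L^p(\Z^n\times\T^n)}^{2}\,(\nu\otimes\mu)(\Sigma)^{(p-2)/p}.
\]
Now I would invoke the bound \eqref{eq22}, $\|V_g f\|_{L^p(\Z^n\times\T^n)}\le \|f\|_{\ell^2(\Z^n)}\|g\|_{\ell^2(\Z^n)}$, which holds precisely for $p\in[2,\infty)$.

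Combining the two displayed inequalities and cancelling the common factor $\|f\|_{\ell^2(\Z^n)}^2\|g\|_{\ell^2(\Z^n)}^2$ (nonzero since $f,g\neq 0$) yields
\[
1-\varepsilon_\Sigma^2 \;\le\; (\nu\otimes\mu)(\Sigma)^{(p-2)/p}.
\]
Since $p>2$, the exponent $(p-2)/p$ is strictly positive, so raising both sides to the power $p/(p-2)$ preserves the inequality and gives the desired estimate $(\nu\otimes\mu)(\Sigma)\ge (1-\varepsilon_\Sigma^2)^{p/(p-2)}$. There is no real obstacle here; the only conceptual point is spotting that H\"older interpolation between the $L^2$-mass on $\Sigma$ and the global $L^p$ bound is exactly what is needed to turn the crude $L^{\infty}$ argument of the previous theorem into the sharper finite-$p$ statement.
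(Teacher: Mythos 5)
Your proof is correct and follows the paper's own argument step for step: start from inequality \eqref{eq30}, apply H\"older's inequality with conjugate exponents $\frac{p}{2}$ and $\frac{p}{p-2}$ to $\left\|\chi_{\Sigma} V_g f\right\|_{L^2(\Z^n \times \T^n)}^2$, and then invoke the bound \eqref{eq22} before cancelling $\|f\|_{\ell^2(\Z^n)}^2\|g\|_{\ell^2(\Z^n)}^2$. Your remark that the previous theorem is recovered in the limit $p\to\infty$ is a nice observation not made in the paper, but the substance of the argument is the same.
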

\begin{proof}
Since $V_{g} f$ is $ \varepsilon_{\Sigma}$-time-frequency concentrated on $\Sigma$, from \eqref{eq30}, we have 
\begin{align*} 
(1-\varepsilon_\Sigma^2) \Vert f \Vert_{\ell^2(\Z^n)}^2 \; \Vert g \Vert_{\ell^2(\Z^n)}^2 
&\leq \left \|  \chi_{\Sigma} V_{g} f \right\|_{L^2(\Z^n \times \T^n)}^2.
\end{align*}
Again, aplying H\"older’s inequality for the conjugate exponents $\frac{p}{2}$ and $\frac{p}{p-2}$, we get
\begin{align*}
\left \|  \chi_{\Sigma} V_{g} f \right\|_{L^2(\Z^n \times \T^n)}^2&\leq  \left \|  V_{g} f \right\|_{L^p(\Z^n \times \T^n)}^2 ((\nu \otimes \mu) (\Sigma))^{\frac{p-2}{p}}.
\end{align*}
Now, using  the relation \eqref{eq22}, we obtain 
\begin{align*} 
\left \|  \chi_{\Sigma} V_{g} f \right\|_{L^2(\Z^n \times \T^n)}^2&\leq\; \|f\|_{\ell^2(\Z^n)}^{2} \|g\|_{\ell^2(\Z^n)}^{2}((\nu \otimes \mu) (\Sigma))^{\frac{p-2}{p}}.
\end{align*}
Hence, 	$$(\nu \otimes \mu) (\Sigma)\geq (1-\varepsilon_\Sigma^2)^{\frac{p}{p-2}}.$$
\end{proof}

\begin{theorem}
Let $f \in \ell^1(\Z^n)$ such that $\|  V_{g} f \|_{L^2(\Z^n \times \T^n)}=1$, $g \in \ell^2(\Z^n)$ be a non-zero window function, and $\Sigma \subset \Z^n \times \T^n$ such that  $(\nu \otimes \mu) (\Sigma) < \infty $. Let $E \subset \Z^n$ such that $\nu(E) < \infty $. If $f$ is $ \varepsilon_{E}$-concentrated on $E$ in $ \ell^1(\Z^n)$-norm and $V_{g} f$ is $ \varepsilon_{\Sigma}$-time-frequency concentrated on $\Sigma$, then  
$$\nu(E) \geq (1-\varepsilon_E)^2\;\|f\|_{\ell^1(\Z^n )}^2\;\|g\|_{\ell^2(\Z^n)}^2,$$
and 
$$ \|f\|_{\ell^2(\Z^n)}^{2} \;\|g\|_{\ell^2(\Z^n)}^{2}\;(\nu \otimes \mu) (\Sigma) \geq (1-\varepsilon_\Sigma^2).$$
In particular,
$$\nu(E)~ (\nu \otimes \mu) (\Sigma) \geq (1-\varepsilon_E)^2\;(1-\varepsilon_\Sigma^2).$$
\end{theorem}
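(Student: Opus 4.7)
The plan is to deduce the three inequalities by short arguments that mostly recycle machinery already assembled in the excerpt. A crucial preliminary observation is that the normalization $\|V_g f\|_{L^2(\Z^n\times\T^n)}=1$, combined with Plancherel's formula \eqref{eq17}, forces $\|f\|_{\ell^2(\Z^n)}\,\|g\|_{\ell^2(\Z^n)}=1$. I will use this identity throughout to swap $1$ with $\|f\|_{\ell^2(\Z^n)}^2\|g\|_{\ell^2(\Z^n)}^2$ as convenient.

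For the second inequality, no new work is needed: the chain of estimates \eqref{eq30} in the immediately preceding theorem already gives
\[
(1-\varepsilon_\Sigma^2)\|f\|_{\ell^2(\Z^n)}^2\|g\|_{\ell^2(\Z^n)}^2 \leq \|f\|_{\ell^2(\Z^n)}^2\|g\|_{\ell^2(\Z^n)}^2\,(\nu\otimes\mu)(\Sigma),
\]
which is exactly the claimed bound. For the first inequality, the plan is to exploit the $\ell^1$-concentration of $f$ on $E$ together with Cauchy--Schwarz on the finite set $E$. First, splitting $\|f\|_{\ell^1(\Z^n)} \leq \|\chi_E f\|_{\ell^1(\Z^n)} + \|\chi_{E^c} f\|_{\ell^1(\Z^n)}$ and inserting the concentration hypothesis yields $(1-\varepsilon_E)\|f\|_{\ell^1(\Z^n)} \leq \|\chi_E f\|_{\ell^1(\Z^n)} = \sum_{k\in E}|f(k)|$. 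Then Cauchy--Schwarz against the indicator of $E$ gives $\sum_{k\in E}|f(k)| \leq \sqrt{\nu(E)}\,\|f\|_{\ell^2(\Z^n)}$. Squaring and using the normalization identity $\|f\|_{\ell^2(\Z^n)}^2 = \|g\|_{\ell^2(\Z^n)}^{-2}$ produces the desired bound $\nu(E) \geq (1-\varepsilon_E)^2\|f\|_{\ell^1(\Z^n)}^2\|g\|_{\ell^2(\Z^n)}^2$.

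Finally, for the product inequality, I would multiply the two estimates already obtained and use the elementary fact that on $\Z^n$ equipped with counting measure one has $\|f\|_{\ell^1(\Z^n)} \geq \|f\|_{\ell^2(\Z^n)}$, which is immediate from $\bigl(\sum_k|f(k)|\bigr)^2 \geq \sum_k|f(k)|^2$. Combined with the normalization this gives $\|f\|_{\ell^1(\Z^n)}^2\|g\|_{\ell^2(\Z^n)}^2 \geq \|f\|_{\ell^2(\Z^n)}^2\|g\|_{\ell^2(\Z^n)}^2 = 1$, and multiplying the two inequalities yields $\nu(E)(\nu\otimes\mu)(\Sigma)\geq(1-\varepsilon_E)^2(1-\varepsilon_\Sigma^2)$. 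There is no real obstacle here; the whole statement is a clean packaging of \eqref{eq30}, the Plancherel normalization, and a Cauchy--Schwarz estimate on $E$. The only mild point to remember is that the $\ell^1$-versus-$\ell^2$ comparison is reversed relative to the continuous $L^p$ case, which is precisely what makes the final product inequality go through.
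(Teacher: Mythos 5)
Your proposal is correct and follows essentially the same route as the paper: the second inequality is read off from the chain \eqref{eq30} together with $\|f\|_{\ell^2(\Z^n)}\|g\|_{\ell^2(\Z^n)}=1$, the first comes from the $\ell^1$-concentration plus Cauchy--Schwarz on $E$, and the product inequality follows by combining the two via the discrete comparison $\|f\|_{\ell^1(\Z^n)}\geq\|f\|_{\ell^2(\Z^n)}$, exactly as in the paper's proof.
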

\begin{proof}
Since $V_{g} f$ is $ \varepsilon_{\Sigma}$-time-frequency concentrated on $\Sigma$, from \eqref{eq30}, we have 
\begin{align*}
(1-\varepsilon_\Sigma^2) \Vert f \Vert_{\ell^2(\Z^n)}^2 \; \Vert g \Vert_{\ell^2(\Z^n)}^2 
&\leq \left \|  \chi_{\Sigma} V_{g} f \right\|_{L^2(\Z^n \times \T^n)}^2.
\end{align*}
Since $\| V_{g} f \|_{L^2(\Z^n \times \T^n)}=1,$ using the relation (\ref{eq21}) and Plancherel's formula (\ref{eq17}), we obtain
\begin{align}\label{eq33}\nonumber
(1-\varepsilon_\Sigma^2)
\leq \left \| \chi_{\Sigma} V_{g} f \right\|_{L^2(\Z^n \times \T^n)}^2
& \leq \left \| V_{g} f \right\|_{L^\infty(\Z^n \times \T^n)}^2 ~ (\nu \otimes \mu) (\Sigma)\\
& \leq \|f\|_{\ell^2(\Z^n)}^{2}~\|g\|_{\ell^2(\Z^n)}^{2}\; (\nu \otimes \mu) (\Sigma).
\end{align}	
Similarly, since $f$ is $ \varepsilon_{E}$-concentrated on $E$ in $ \ell^1(\Z^n)$-norm, using the  Cauchy--Schwarz inequality and the fact that 
$\|f\|_{\ell^2(\Z^n)}\|g\|_{\ell^2(\Z^n)}=1$, we get 
\begin{align}\label{eq34} 
(1-\varepsilon_E) \Vert f \Vert_{\ell^1(\Z^n)} \leq \left \| \chi_{E}f \right\|_{\ell^1(\Z^n )} \leq \left \| f \right\|_{\ell^2(\Z^n )}  \nu (E)^{\frac{1}{2}} 
=\frac{\nu (E)^{\frac{1}{2}}}{\left \| g\right\|_{\ell^2(\Z^n )} } .
\end{align}
Finally, from (\ref{eq33}) and (\ref{eq34}), we have 
$$ \nu(E)~ (\nu \otimes \mu) (\Sigma)\;  \|f\|_{\ell^1(\Z^n)}^{2} \geq  \nu (E)~ (\nu \otimes \mu) (\Sigma)\;  \|f\|_{\ell^2(\Z^n)}^{2}\geq  (1-\varepsilon_E)^2\;(1-\varepsilon_\Sigma^2) \;\|f\|_{\ell^1(\Z^n )}^{2}.$$
Hence,
\[ \nu(E)~ (\nu \otimes \mu) (\Sigma) \geq (1-\varepsilon_E)^2\;(1-\varepsilon_\Sigma^2). \]
This completes the proof of the theorem.
\end{proof}

Let $$|(m, w)|=\sqrt{|m|^2+|w|^2}=\sqrt{|m_1|^2+\cdots+|m_n|^2+|w_1|^2+\cdots+|w_n|^2}.$$ 
For $r>0$, let $B_{r}=\{(m, w) \in \Z^n \times \T^n : ~|(m, w)| \leq r \}$ be the ball with centre at origin  and radius $r$ in $\Z^n \times \T^n$. We have the following estimate for the measure $(\nu \otimes \mu)(B_{r})$ of $B_{r}$ on $\Z^n \times \T^n$:
\begin{align*}
(\nu \otimes \mu)(B_{r}) 
&= \sum_{|m| \leq r} \int_{|w|^2\leq r^2-|m|^2} dw 
= 2 \sum_{|m| \leq r} \int_{0}^{\sqrt{r^2-|m|^2}}dw 
= 2 \sum_{|m| \leq r} \sqrt{r^2-|m|^2} \\
& \leq 2 r \; \text{Card}(\{m \in \Z^n: |m| \leq r \}).
\end{align*}
Notice that the cardinality of the set $\{m \in \Z^n: |m| \leq r \}$ is the number of integer lattice points within a sphere centered at the origin and with the radius $r$, which is famous Gauss's circle problem (see \cite{cil93, hard59}). This number is approximated by the area of the sphere, so the problem is to accurately bound the error term describing how the number of points differs from the area. Some partial results on the upper bound of the error term are available till now. Using the upper bound of the error term, one can obtain an estimate for the cardinality of the set $\{m \in \Z^n: |m| \leq r \}$.

For $s>0$, we define the time-frequency dispersion on $\Z^n \times \T^n$ by
\[ \rho_s(V_gf)=\left(\sum_{m\in\Z^n}\int_{\T^n}|(m,w)|^s \;|V_gf(m,w)|^2 \; dw \right)^{1/s}. \]
In the following proposition, we show that the STFT of an orthonormal sequence cannot be $\varepsilon_{B_{r}}$-time-frequency concentrated on $B_{r}$ with the measure $(\nu \otimes \mu)(B_{r})< \infty$.

\begin{proposition}\label{finite}
Let $g$ be a window function with $ \| g \|_{\ell^2(\Z^n)}=1$ and $B_{r}$ be the ball in $\Z^n \times \T^n$. Let $\varepsilon_{B_{r}}$ and $r$ be two positive real numbers such that $\varepsilon_{B_{r}} < 1$. Let $K$ be a non-empty subset of $\mathbb{N}$ and $\{\phi_{k} \}_{k \in K}$ be an orthonormal sequence in $\ell^2(\Z^n)$. If $V_{g} \phi_{k}$ is $\varepsilon_{B_{r}}$-time-frequency concentrated on $B_{r}$ for every $k \in K$, then $K$ is finite and 
\[ \mathrm{Card} (K) \leq \frac{(\nu \otimes \mu)(B_{r})}{(1-\varepsilon_{B_{r}})}. \]
\end{proposition}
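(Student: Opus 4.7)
The plan is to reduce Proposition \ref{finite} directly to the orthonormal sequence uncertainty principle (Theorem \ref{thm3}) applied to the set $\Sigma = B_r$. The key preliminary observation is that $B_r$ indeed has finite $(\nu \otimes \mu)$-measure: the bound computed just before the statement gives $(\nu \otimes \mu)(B_r) \leq 2r\,\mathrm{Card}(\{m \in \Z^n : |m| \leq r\}) < \infty$, so Theorem \ref{thm3} is applicable.

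Next, I would exploit the normalization $\|g\|_{\ell^2(\Z^n)} = 1$ and the fact that $\{\phi_k\}_{k \in K}$ is orthonormal. By Plancherel's formula \eqref{eq17}, $\|V_g \phi_k\|_{L^2(\Z^n \times \T^n)} = \|\phi_k\|_{\ell^2(\Z^n)} \|g\|_{\ell^2(\Z^n)} = 1$ for every $k \in K$. The time-frequency concentration hypothesis therefore simplifies to
\[
\|\chi_{B_r^c} V_g \phi_k\|_{L^2(\Z^n \times \T^n)} \leq \varepsilon_{B_r}, \qquad k \in K.
\]

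Now I would take an arbitrary finite subset $K' \subset K$ of cardinality $N$, enumerate it as $\{\phi_{k_1}, \dots, \phi_{k_N}\}$, and apply Theorem \ref{thm3} with $\Sigma = B_r$. This yields
\[
\sum_{j=1}^{N}\bigl(1 - \|\chi_{B_r^c} V_g \phi_{k_j}\|_{L^2(\Z^n \times \T^n)}\bigr) \leq (\nu \otimes \mu)(B_r).
\]
Since each summand is bounded below by $1 - \varepsilon_{B_r} > 0$, we obtain $N(1 - \varepsilon_{B_r}) \leq (\nu \otimes \mu)(B_r)$, hence
\[
N \leq \frac{(\nu \otimes \mu)(B_r)}{1 - \varepsilon_{B_r}}.
\]

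Because this bound holds uniformly over all finite subsets of $K$, the set $K$ itself must be finite with $\mathrm{Card}(K) \leq (\nu \otimes \mu)(B_r)/(1-\varepsilon_{B_r})$. There is no genuine obstacle here; the only technical point to verify carefully is the passage from the estimate on arbitrary finite subsets to the finiteness and cardinality bound for $K$, but this is a standard supremum argument once $1 - \varepsilon_{B_r} > 0$ is fixed.
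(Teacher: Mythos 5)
Your proposal is correct and follows essentially the same route as the paper: both apply Theorem \ref{thm3} with $\Sigma = B_r$ to an arbitrary finite subset of $K$, use the normalization $\|V_g\phi_k\|_{L^2(\Z^n\times\T^n)}=1$ to reduce the concentration hypothesis to $\|\chi_{B_r^c}V_g\phi_k\|_{L^2(\Z^n\times\T^n)}\leq\varepsilon_{B_r}$, and conclude by noting the resulting bound is uniform over finite subsets. No substantive differences.
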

\begin{proof}
Let $M \subset K$ be a non-empty finite subset, then from Theorem \ref{thm3}, we obtain that 
\begin{eqnarray}\label{6} 
\sum_{k \in M} \left(1-\left\|\chi_{B_{r}^{c}} V_g \phi_{k}\right\|_{L^2(\Z^n \times \T^n)}\right) \leq (\nu \otimes \mu)(B_{r}). 
\end{eqnarray}
Also, for every $k \in K$, $V_{g} \phi_{k}$ is $\varepsilon_{B_{r}}$-time-frequency concentrated on $B_{r}$. Therefore
\begin{eqnarray}\label{7}
\left\|\chi_{B_{r}^{c}} V_{g} \phi_{k} \right\|_{L^{2}\left(\Z^n \times \T^n \right)} \leq \varepsilon_{B_{r}} \left\|  V_{g} \phi_{k} \right\|_{L^{2}\left(\Z^n \times \T^n \right)}=\varepsilon_{B_{r}}.
\end{eqnarray}
Hence, by combining relations \eqref{6} and \eqref{7}, we get
\[ \text{Card}(M) \leq \frac{(\nu \otimes \mu)(B_{r})}{(1-\varepsilon_{B_{r}})} . \]
Since $\dfrac{(\nu \otimes \mu)(B_{r})}{(1-\varepsilon_{B_{r}})}$ is independent of $M$, therefore $\text{Card}(K)$ is finite and 
$ \text{Card}(K)\leq \dfrac{(\nu \otimes \mu)(B_{r})}{(1-\varepsilon_{B_{r}})}. $
\end{proof}

\begin{corollary}\label{uniformly}
Let $s>0$ and $g$ be a window function such that $\|g\|_{\ell^2(\Z^n)}=1$. Let $K$ be a non-empty subset of $\mathbb{N}$ and $\{\phi_{k} \}_{k \in K}$ be an orthonormal sequence in $\ell^2(\Z^n)$. If the sequence $\{ \rho_s ( V_{g} \phi_{k} ) \}_{k \in K}$ is uniformly bounded by $A$, then $K$ is finite and $\mathrm{Card}(K) \leq 2 \; (\nu \otimes \mu)(B_{A2^{\frac{2}{s}}})$.  
\end{corollary}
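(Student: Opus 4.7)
The plan is to turn the uniform moment bound $\rho_s(V_g\phi_k)\le A$ into a uniform time-frequency concentration statement on a ball, and then invoke Proposition \ref{finite}. The key tool is a Chebyshev-type tail estimate.

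First, I would show that for any $r>0$ and any $k\in K$, the inequality $|(m,w)|^s/r^s \ge 1$ on $B_r^c$ gives
\[
\|\chi_{B_r^c}V_g\phi_k\|_{L^2(\Z^n\times\T^n)}^{2}
\;\le\; \frac{1}{r^s}\sum_{m\in\Z^n}\int_{\T^n}|(m,w)|^s\,|V_g\phi_k(m,w)|^2\,dw
\;=\;\frac{\rho_s(V_g\phi_k)^s}{r^s}\;\le\;\frac{A^s}{r^s}.
\]
Since $\|g\|_{\ell^2(\Z^n)}=\|\phi_k\|_{\ell^2(\Z^n)}=1$, Plancherel's formula \eqref{eq17} yields $\|V_g\phi_k\|_{L^2(\Z^n\times\T^n)}=1$, so the previous inequality reads
\[
\|\chi_{B_r^c}V_g\phi_k\|_{L^2(\Z^n\times\T^n)} \;\le\; \Bigl(\frac{A}{r}\Bigr)^{s/2}\,\|V_g\phi_k\|_{L^2(\Z^n\times\T^n)}.
\]
Hence $V_g\phi_k$ is $\varepsilon_{B_r}$-time-frequency concentrated on $B_r$, with $\varepsilon_{B_r}=(A/r)^{s/2}$, uniformly in $k\in K$.

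Now I would choose $r$ so that $1-\varepsilon_{B_r}$ is exactly $1/2$, which forces the factor $2$ in the target bound: take $r=A\,2^{2/s}$, so that $\varepsilon_{B_r}=(2^{-2/s})^{s/2}=1/2<1$. Applying Proposition \ref{finite} with this $r$ and this $\varepsilon_{B_r}$ gives that $K$ is finite and
\[
\mathrm{Card}(K)\;\le\;\frac{(\nu\otimes\mu)(B_r)}{1-\varepsilon_{B_r}}\;=\;2\,(\nu\otimes\mu)(B_{A2^{2/s}}),
\]
which is the claimed estimate.

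No genuine obstacle arises: the only delicate point is the bookkeeping of exponents when picking $r$, since one must ensure $\varepsilon_{B_r}\le 1/2$ (rather than merely $<1$) in order to reproduce the explicit constant $2$ in front of $(\nu\otimes\mu)(B_{A2^{2/s}})$. All remaining ingredients—the Chebyshev inequality on $B_r^c$, Plancherel's identity, and Proposition \ref{finite}—are used as black boxes.
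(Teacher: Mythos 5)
Your proof is correct and follows essentially the same route as the paper: a Chebyshev-type tail bound on $B_r^c$ combined with Plancherel's formula shows each $V_g\phi_k$ is $\tfrac12$-time-frequency concentrated on $B_r$ with $r=A2^{2/s}=4^{1/s}A$, and Proposition \ref{finite} then yields $\mathrm{Card}(K)\le 2\,(\nu\otimes\mu)(B_{A2^{2/s}})$. The only cosmetic difference is that the paper writes the Chebyshev step by inserting $|(m,w)|^{-s/2}|(m,w)|^{s/2}$ and bounding the first factor by $r^{-s/2}$, which is the same estimate.
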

\begin{proof}
Assume that $ \rho_s ( V_{g} \phi_{k} ) \leq A$, for every $k \in K$. Let $r = 4^{\frac{1}{s}} A$. Then
\begin{align*}
\left\|\chi_{B_{r}^{c}} V_{g} \phi_{k} \right\|^2_{L^{2}\left(\Z^n \times \T^n \right)} 
& = \left\| |(m,w)|^{-\frac{s}{2}} \; |(m,w)|^{\frac{s}{2}} \; \chi_{B_{r}^{c}} V_{g} \phi_{k} \right\|^2_{L^{2}\left(\Z^n \times \T^n \right)} \\
& \leq \frac{1}{4 A^s} \left\| |(m,w)|^{\frac{s}{2}} \; \chi_{B_{r}^{c}} V_{g} \phi_{k} \right\|^2_{L^{2}\left(\Z^n \times \T^n \right)} \\
 & \leq \frac{1}{4 A^s} \times A^s 
= \frac{1}{4}\| \phi_{k}\|^2_{\ell^2(\Z^n)} =\frac{1}{4}\|V_{g} \phi_{k} \|^2_{L^2(\Z^n\times\T^n)} .
\end{align*}
Thus, we have
\[ \left\|\chi_{B_{r}^{c}} V_{g} \phi_{k} \right\|_{L^{2}\left(\Z^n \times \T^n \right)} \leq \frac{1}{2}  \left\|  V_{g} \phi_{k} \right\|_{L^{2}\left(\Z^n \times \T^n \right)}. \]
Hence, for every $k \in K$, $V_{g} \phi_{k}$ is $\frac{1}{2}$-time-frequency concentrated on $B_{A2^{\frac{2}{s}}}$. Now, using Proposition \ref{finite}, we obtain that $\text{Card}(K)$ is finite and $\text{Card}(K) \leq 2 \;  (\nu \otimes \mu)(B_{A2^{\frac{2}{s}}})$. 
\end{proof}

\subsection{Benedicks-type uncertainty principle for the STFT}
Here, we study Benedicks-type uncertainty principle for the STFT on $\Z^n \times \T^n$.
First, we show that this transform cannot be concentrated inside a set of measure arbitrarily small.   

\begin{proposition}\label{eq41}
Let $g \in \ell^2(\Z^n)$ be a non-zero window function and $\Sigma\subset \Z^n \times \T^n$. If $\left\|P_{\Sigma} P_{g}\right\|<1$,  then there exists a constant $c(\Sigma, g)>0$ such that for every   $f \in  \ell^2(\Z^n),$ we have
$$\|f\|_{\ell^2(\Z^n)} \|g\|_{\ell^2(\Z^n)}\leq  c(\Sigma, g)\left\|\chi_{\Sigma^{c}} V_{g} f\right\|_{L^{2}\left(\Z^n \times \T^n\right)}. $$
\end{proposition}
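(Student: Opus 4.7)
The plan is to exploit the fact that $V_g f$ already lives in the reproducing kernel space $\operatorname{Im} P_g$, so the projection $P_g$ acts as the identity on it, and then use the operator-norm hypothesis on $P_\Sigma P_g$ to absorb the $\chi_\Sigma$-part of $V_g f$ into the left-hand side.

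First, I would decompose $V_g f$ into its restrictions to $\Sigma$ and $\Sigma^c$:
\[
V_g f \;=\; \chi_{\Sigma} V_g f \;+\; \chi_{\Sigma^c} V_g f \;=\; P_{\Sigma} V_g f \;+\; \chi_{\Sigma^c} V_g f.
\]
Since the orthogonality relation \eqref{eq18} implies that $V_g f$ belongs to the range of $P_g$, we have $P_g V_g f = V_g f$, so $P_{\Sigma} V_g f = P_{\Sigma} P_g V_g f$. Taking $L^2(\Z^n \times \T^n)$-norms and applying the triangle inequality, together with the definition of the operator norm $\|P_\Sigma P_g\|_{op}$, gives
\[
\|V_g f\|_{L^2(\Z^n \times \T^n)} \leq \|P_\Sigma P_g\|_{op}\, \|V_g f\|_{L^2(\Z^n \times \T^n)} + \|\chi_{\Sigma^c} V_g f\|_{L^2(\Z^n \times \T^n)}.
\]

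Next, using the assumption $\|P_\Sigma P_g\|_{op} < 1$, I would rearrange this inequality to isolate $\|V_g f\|_{L^2}$:
\[
\bigl(1 - \|P_\Sigma P_g\|_{op}\bigr) \|V_g f\|_{L^2(\Z^n \times \T^n)} \leq \|\chi_{\Sigma^c} V_g f\|_{L^2(\Z^n \times \T^n)}.
\]
Finally, invoking Plancherel's formula \eqref{eq17} to replace $\|V_g f\|_{L^2(\Z^n \times \T^n)}$ by $\|f\|_{\ell^2(\Z^n)} \|g\|_{\ell^2(\Z^n)}$, and setting
\[
c(\Sigma, g) := \frac{1}{1 - \|P_\Sigma P_g\|_{op}} \;>\; 0,
\]
yields the desired inequality. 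There is no real obstacle here; the only point that must be checked carefully is the identity $P_g V_g f = V_g f$, which is immediate from the fact that $V_g(\ell^2(\Z^n))$ is precisely the range of $P_g$. The hypothesis $\|P_\Sigma P_g\|_{op} < 1$ is exactly what is needed to ensure the constant $c(\Sigma, g)$ is finite and positive.
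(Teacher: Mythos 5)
Your proof is correct, and its skeleton is the same as the paper's: both rest on the reproducing identity $P_g V_g f = V_g f$, the operator-norm bound $\|P_\Sigma P_g V_g f\| \leq \|P_\Sigma P_g\|_{op}\|V_g f\|$, and Plancherel's formula. The one genuine difference is how you split off the $\Sigma$-part. You write $V_g f = P_\Sigma P_g V_g f + \chi_{\Sigma^c}V_g f$ and apply the triangle inequality, obtaining the constant $c(\Sigma,g) = \bigl(1-\|P_\Sigma P_g\|_{op}\bigr)^{-1}$. The paper instead uses that $\chi_\Sigma F$ and $\chi_{\Sigma^c}F$ are orthogonal in $L^2(\Z^n\times\T^n)$, so the Pythagorean identity
\[
\|P_g F\|_{L^2(\Z^n\times\T^n)}^2=\|P_\Sigma P_g F\|_{L^2(\Z^n\times\T^n)}^2+\|P_{\Sigma^c} P_g F\|_{L^2(\Z^n\times\T^n)}^2
\]
replaces your triangle inequality and yields the sharper constant $c(\Sigma,g)=\bigl(1-\|P_\Sigma P_g\|_{op}^2\bigr)^{-1/2}$, which is smaller than yours since $\sqrt{1-t^2}\geq 1-t$ for $0\leq t<1$. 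Because the proposition only asserts the existence of some positive constant, both arguments prove the stated result; the paper's version simply buys a quantitatively better bound at no extra cost, so if you care about the constant you should use the orthogonal splitting rather than the triangle inequality.
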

\begin{proof}
Since $P_{\Sigma}$ is an orthogonal projection on $L^2(\Z^n \times \T^n)$, for any $F\in L^2(\Z^n \times \T^n)$, we get  
$$
\|P_g(F)\|_{L^{2}\left(\Z^n \times \T^n\right)}^2=\|P_{\Sigma}P_g(F)\|_{L^{2}\left(\Z^n \times \T^n\right)}^2+\|P_{\Sigma^c}P_g(F)\|_{L^{2}\left(\Z^n \times \T^n\right)}^2.
$$
Again, using the identity $P_{\Sigma}P_g(F)=P_{\Sigma}P_g \cdot P_g(F)$, we get $$
\|P_{\Sigma}P_g(F)\|_{L^{2}\left(\Z^n \times \T^n\right)}^2 \leq\|P_{\Sigma}P_g\|^2 \| P_g(F)\|_{L^{2}\left(\Z^n \times \T^n\right)}^2.
$$
Hence, 
\begin{align}\label{eq44}
\|P_g(F)\|_{L^{2}\left(\Z^n \times \T^n\right)}^2\leq \frac{1}{1-\|P_{\Sigma}P_g\|^2 } \|P_{\Sigma^c}P_g(F)\|_{L^{2}\left(\Z^n \times \T^n\right)}^2.
\end{align} 
Since $P_{g}$ is an orthogonal projection from $L^2(\Z^n \times \T^n)$ onto $V_g \left(\ell^2 \left(\Z^n\right)\right)$, for any $f\in \ell^2(\Z^n)$, using the relation  (\ref{eq44}) and Plancherel's formula (\ref{eq17}),  we get 
%$$\|V_{g} f\|_{L^{2}\left(\Z^n \times \T^n\right)}^2\leq \frac{1}{1-\|P_{\Sigma}P_g(F)\|^2 }\left\|\chi_{\Sigma^{c}} V_{g} f\right\|_{L^{2}\left(\Z^n \times \T^n\right)}^2.$$
$$\|f\|_{\ell^2(\Z^n)} \|g\|_{\ell^2(\Z^n)}\leq  \frac{1}{ \sqrt{1-\|P_{\Sigma}P_g\|^2}}\left\|\chi_{\Sigma^{c}} V_{g} f\right\|_{L^{2}\left(\Z^n \times \T^n\right)}.$$
The desired result follows  by choosing the constant $c(\Sigma, g)=  \frac{1}{ \sqrt{1-\|P_{\Sigma}P_g\|^2}}$.
\end{proof}

In the following, we obtain Benedicks-type uncertainty principle for the STFT on $\Z^n \times \T^n$.

\begin{theorem}\label{eq40}
Let $g \in \ell^2(\Z^n)$ be a non-zero window function  such that 
$$\mu \left( \left\{  \mathcal{F}_{\Z^n} (g) \neq 0 \right\} \right)<\infty.$$
Then for any  $\Sigma \subset \Z^n \times \T^n$  such that for almost every $w \in  \T^n, \; \sum\limits_{m \in \Z^n} \chi_{\Sigma}(m, w) <\infty $, we have
$$ V_g \left(\ell^2(\Z^n)\right)  \cap \operatorname{Im} P_{\Sigma}=\{0\}. $$
\end{theorem}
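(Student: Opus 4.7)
The plan is to argue by contradiction. Suppose there is a non-zero $f\in\ell^{2}(\Z^n)$ with $V_g f\in\operatorname{Im}P_{\Sigma}$, equivalently $V_g f(m,w)=0$ for $(m,w)\notin\Sigma$ (up to null sets). The key idea is to apply $\mathcal{F}_{\Z^n}$ to the slice $V_g f(\cdot,w)$ and exploit that this slice is finitely supported for a.e.\ $w$.

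First, I would derive the identity
\[
\mathcal{F}_{\Z^n}\!\bigl(V_g f(\cdot,w)\bigr)(\xi)=\hat{f}(\xi+w)\,\overline{\hat{g}(\xi)},\qquad (\xi,w)\in\T^n\times\T^n,
\]
by writing $V_g f(\cdot,w)=F_w*\tilde{g}$ as a convolution on $\Z^n$, where $F_w(k)=f(k)e^{-2\pi i w\cdot k}$ and $\tilde{g}(k)=\overline{g(-k)}$, and using that $\mathcal{F}_{\Z^n}$ turns convolution into pointwise product. The hypothesis on $\Sigma$ says the slice $\Sigma_w:=\{m\in\Z^n:(m,w)\in\Sigma\}$ is finite for a.e.\ $w$, so $V_g f(\cdot,w)$ is finitely supported for a.e.\ $w$. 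Consequently, for such $w$ the left-hand side of the identity is a trigonometric polynomial in $\xi$, in particular a real-analytic function on $\T^n$.

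Next, the hypothesis on $g$ guarantees a set $E\subset\T^n$ of positive Lebesgue measure on which $\hat{g}$ vanishes. For a.e.\ $w$, the trigonometric polynomial $P_w(\xi):=\hat{f}(\xi+w)\,\overline{\hat{g}(\xi)}$ then vanishes on $E$, i.e.\ on a set of positive measure. Since every non-zero trigonometric polynomial on the connected real-analytic manifold $\T^n$ has a zero set of Lebesgue measure zero, $P_w\equiv0$ on $\T^n$ for a.e.\ $w$. Hence $\hat{f}(\xi+w)\,\overline{\hat{g}(\xi)}=0$ a.e.\ on $\T^n\times\T^n$; fixing any $\xi_0$ with $\hat{g}(\xi_0)\neq0$ and letting $w$ range then gives $\hat{f}\equiv0$. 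By the isomorphism $\mathcal{F}_{\Z^n}:\ell^{2}(\Z^n)\to L^{2}(\T^n)$, this forces $f=0$, contradicting $V_g f\neq0$.

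The main obstacle is the real-analytic input: justifying that every non-zero trigonometric polynomial on $\T^n$ has a Lebesgue-null zero set. I would handle this either by invoking the identity principle for real-analytic functions on a connected manifold (trigonometric polynomials are entire on $\C^n$, hence real-analytic on $\T^n$), or inductively on $n$ via Fubini and the one-dimensional fact that a non-zero trigonometric polynomial on $\T$ has at most finitely many zeros. A secondary technical point is merging the countably many ``a.e.\ $w$'' exceptional sets appearing along the argument into a single null set, so that the final Fubini step is legitimate.
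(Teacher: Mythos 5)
Your proof is correct and follows essentially the same route as the paper: fix $w$, observe that the slice $V_g f(\cdot,w)$ is finitely supported, factor its Fourier transform as $\hat{f}(\xi+w)\,\overline{\hat{g}(\xi)}$, and conclude that each slice vanishes. The only real difference is at the final step, where the paper cites the Benedicks--Amrein--Berthier qualitative uncertainty principle for $\Z^n$ while you prove it directly from the fact that a non-zero trigonometric polynomial on $\T^n$ has a Lebesgue-null zero set; note that both arguments actually require reading the hypothesis on $g$ as $\mu\left(\left\{\mathcal{F}_{\Z^n}(g)\neq 0\right\}\right)<\mu(\T^n)$ rather than the literal ``$<\infty$'' (which is vacuous on $\T^n$, and without a positive-measure zero set for $\mathcal{F}_{\Z^n}(g)$ the statement fails, e.g.\ for $g=\delta_0$ and $f=\delta_0$ with $\Sigma=\{0\}\times\T^n$), a reading you correctly make explicit.
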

\begin{proof}
Let $F \in V_g \left(\ell^2(\Z^n)\right) \cap \operatorname{Im} P_{\Sigma}$ be a non-trivial function. Then there exists a function $f \in \ell^2(\Z^n)$ such that $F=	V_{g} f$ and $\supp F \subset \Sigma$. For any $w \in \T^n$, we consider the function
$$ F_{w}(m)=V_{g} f(m, w),\quad m \in \Z^n. $$ 
Then, we get $\supp F_{w} \subset\{m \in \Z^n :  (m, w ) \in \Sigma \}.$ Since for almost every $w \in \T^n,  \sum\limits_{m \in \Z^n} \chi_{\Sigma}(m, w) < \infty$, we have $ \nu \left( \supp F_{w}\right)<\infty$. Since $V_g f(m,w)=(M_{-w} f * \overline{\tilde{g}})(m)$, we get
$$\mathcal{F}_{\Z^n} (F_{w})= \mathcal{F}_{\Z^n} (M_{-w} f) \;\; \mathcal{F}_{\Z^n} (\overline{\tilde{g}}) \quad \;a.e. $$
Hence,
$$ \supp \mathcal{F}_{\Z^n} (F_{w}) \subset  \supp   \mathcal{F}_{\Z^n} (\overline{\tilde{g}} ),$$ 
and from the hypothesis, we get  $\mu \left(\left\{\mathcal{F}_{\Z^n} (F_{w}) \neq 0\right\}\right)<\infty$. From Benedicks--Amrein--Berthier's uncertainty principle \cite{hog88}, we conclude that, for every  $w \in \T^n, \; F(\cdot, w)=0$, which eventually  implies that $F=0.$
\end{proof}

\begin{remark}
Let $g \in \ell^2(\Z^n)$ be a non-zero window function  such that  $$\mu \left(  \left\{ \mathcal{F}_{\Z^n} (g)  \neq 0 \right\} \right)<\infty.$$ Then, for any non-zero function $f \in \ell^2(\Z^n)$, we have $(\nu \otimes \mu) ( \supp V_{g} f)=\infty$, i.e., the support of $V_{g} f$ cannot be of   finite measure. If $(\nu \otimes \mu) ( \supp V_{g} f) < \infty$, then from Theorem \ref{eq40}, we get $f \equiv 0$ or $g \equiv 0$.
\end{remark}

\begin{proposition}
Let $g \in \ell^2(\Z^n)$ be a non-zero window function  such that  $$\mu \left(  \left\{\mathcal{F}_{\Z^n} (g)  \neq 0 \right\} \right)<\infty.$$  Let $\Sigma \subset \Z^n \times \T^n$ such that $(\nu \otimes \mu) (\Sigma)<\infty $,  then there exists a constant $c(\Sigma, g)>0$ such that
$$
\|f\|_{\ell^2(\Z^n)} \|g\|_{\ell^2(\Z^n)} \leq c(\Sigma, g ) 	\left\|\chi_{\Sigma^{c}} V_{g} f\right\|_{L^{2}\left(\Z^n \times \T^n\right)}.
$$
\end{proposition}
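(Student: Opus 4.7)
The plan is to reduce this proposition to Proposition \ref{eq41} by establishing the strict inequality $\|P_\Sigma P_g\|_{op} < 1$ under the given hypotheses. Once this is shown, setting $c(\Sigma,g) = (1-\|P_\Sigma P_g\|_{op}^2)^{-1/2}$ and invoking Proposition \ref{eq41} gives the desired bound immediately.

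First I would verify that Theorem \ref{eq40} is applicable. Since $(\nu \otimes \mu)(\Sigma) < \infty$, Fubini's theorem (applied to $\chi_\Sigma$) guarantees that $\sum_{m \in \Z^n} \chi_\Sigma(m,w) < \infty$ for almost every $w \in \T^n$. Combined with the hypothesis $\mu(\{\mathcal{F}_{\Z^n}(g) \neq 0\}) < \infty$, Theorem \ref{eq40} yields
\[
V_g\left(\ell^2(\Z^n)\right) \cap \operatorname{Im} P_\Sigma = \{0\}.
\]

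Next I would use Theorem \ref{eq26} to observe that $P_\Sigma P_g$ is Hilbert--Schmidt, hence compact. Since $P_\Sigma$ and $P_g$ are orthogonal projections, we have $\|P_\Sigma P_g\|_{op} \leq 1$. I claim the inequality is strict. If instead $\|P_\Sigma P_g\|_{op} = 1$, then compactness of $P_\Sigma P_g$ (equivalently of its adjoint $P_g P_\Sigma$, and of $(P_\Sigma P_g)^* P_\Sigma P_g = P_g P_\Sigma P_g$) ensures the supremum is attained: there exists $F \in L^2(\Z^n \times \T^n)$ with $\|F\|_{L^2(\Z^n \times \T^n)} = 1$ and $\|P_\Sigma P_g F\|_{L^2(\Z^n \times \T^n)} = 1$. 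Writing $G = P_g F$, the chain $1 = \|P_\Sigma G\|_{L^2(\Z^n \times \T^n)} \leq \|G\|_{L^2(\Z^n \times \T^n)} \leq \|F\|_{L^2(\Z^n \times \T^n)} = 1$ forces equality throughout. This forces $P_g F = F$, so $F \in \operatorname{Im} P_g = V_g(\ell^2(\Z^n))$, and $P_\Sigma G = G$, so $F = G \in \operatorname{Im} P_\Sigma$. Thus $F$ is a non-zero element of $V_g(\ell^2(\Z^n)) \cap \operatorname{Im} P_\Sigma$, contradicting Step~1.

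With $\|P_\Sigma P_g\|_{op} < 1$ established, Proposition \ref{eq41} directly produces the required constant $c(\Sigma,g) > 0$ and the inequality. The main technical point is the compactness-plus-equality-case argument in the previous paragraph; the rest is essentially assembling tools already proved in the paper. A slightly different route, avoiding the attainment argument, would be to invoke the fact that a compact operator with spectral radius equal to $1$ on a Hilbert space has $1$ as an eigenvalue of $(P_\Sigma P_g)^*(P_\Sigma P_g) = P_g P_\Sigma P_g$, producing the same contradictory non-zero vector in the intersection; either version works.
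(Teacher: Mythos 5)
Your proposal is correct and follows essentially the same route as the paper: use Theorem \ref{eq40} to show $V_g(\ell^2(\Z^n)) \cap \operatorname{Im}P_\Sigma = \{0\}$, use the Hilbert--Schmidt (hence compact) nature of $P_\Sigma P_g$ to conclude $\|P_\Sigma P_g\|_{op}<1$, and then invoke Proposition \ref{eq41}. If anything, your handling of the norm-attainment step (passing to the compact self-adjoint operator $P_g P_\Sigma P_g$ and the equality chain forcing $P_gF=F$ and $P_\Sigma F=F$) is more careful than the paper's appeal to a ``largest eigenvalue'' of the non-self-adjoint operator $P_\Sigma P_g$.
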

\begin{proof}
Assume that $$\left\|P_{\Sigma} P_{g}(F)\right\|_{L^{2}\left(\Z^n \times \T^n\right)} =\|F\|_{L^{2}\left(\Z^n \times \T^n\right)}, \;F \in L^2(\Z^n \times \T^n).$$  Since $P_{\Sigma}$ and $P_{g}$ are  orthogonal projections, we obtain $P_{\Sigma}(F)=P_{g}(F)=F.$ Again, since $(\nu \otimes \mu) (\Sigma)<\infty $, for almost every $w \in  \T^n, \;\;  \sum\limits_{m \in \Z^n} \chi_{\Sigma}(m, w) <\infty$ and from Theorem \ref{eq40}, we get $F=0.$ Hence,  for $F \neq 0$, we obtain  $$\left\|P_{\Sigma} P_{g}(F)\right\|_{L^{2}\left(\Z^n \times \T^n\right)} <\|F\|_{L^{2}\left(\Z^n \times \T^n\right)}. $$  
Since  $P_{\Sigma} P_{g}$ is a Hilbert--Schmidt operator, we obtain that the   largest eigenvalue $|\lambda|$ of the operator $P_{\Sigma} P_{g}$ satisfy $|\lambda|<1$ and $\left\|P_{\Sigma} P_{g}\right\|=|\lambda|<1$. Finally, using Proposition \ref{eq41},  we get the desired result. 
\end{proof}

\subsection{Heisenberg-type uncertainty principle for the STFT} 
In this subsection, we study Heisenberg-type uncertainty inequality for the STFT on $\Z^n \times \T^n$ for general magnitude $s > 0.$  Indeed, we have the following result.
\begin{theorem}\label{eq39}
Let $s>0$. Then there exists a constant $c(s)>0$ such that for all $f, g \in \ell^2(\Z^n)$, we have
\begin{align*}
\left\||m|^s V_{g} f\right\|_{L^2(\Z^n \times \T^n)}^2 +\left\||w|^s V_{g} f\right\|_{L^2(\Z^n \times \T^n)}^2 
\geq  c(s)\; \|f\|_{\ell^2(\Z^n)}^{2}\;\|g\|_{\ell^2(\Z^n)}^{2}.
\end{align*}
\end{theorem}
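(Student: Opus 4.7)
My plan is to run a concentration argument: bound the left-hand side from below by the tail of $V_g f$ outside a small ball, then invoke Proposition \ref{eq35} to control that tail from below by $\|f\|_{\ell^2}\|g\|_{\ell^2}$. The key point is that by making the ball small enough, its phase-space measure stays strictly less than $1$, which forces the constant produced by Proposition \ref{eq35} to be positive.

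First I would record the elementary pointwise inequality
\[
|(m,w)|^{2s} = (|m|^2+|w|^2)^s \leq C_s\bigl(|m|^{2s}+|w|^{2s}\bigr),
\]
with $C_s=\max(1,2^{s-1})$; this is immediate from convexity of $x\mapsto x^s$ when $s\geq 1$ and from subadditivity $(a+b)^s\leq a^s+b^s$ when $0<s<1$. Next, for any $r>0$, integrating $|V_g f|^2$ against $|m|^{2s}+|w|^{2s}$ only over the complement of $B_r$ and using the above inequality gives
\[
\||m|^s V_gf\|_{L^2(\Z^n\times\T^n)}^2+\||w|^s V_gf\|_{L^2(\Z^n\times\T^n)}^2\;\geq\;\frac{r^{2s}}{C_s}\;\|\chi_{B_r^c}V_gf\|_{L^2(\Z^n\times\T^n)}^2.
\]

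Now I want to apply Proposition \ref{eq35}, which requires $(\nu\otimes\mu)(B_r)<1$. Here is where the discrete nature of $\Z^n$ helps: for $0<r<1$ the only lattice point with $|m|\leq r$ is $m=0$, so
\[
(\nu\otimes\mu)(B_r)=\mu\bigl(\{w\in\T^n:|w|\leq r\}\bigr)\longrightarrow 0 \quad\text{as}\quad r\to 0^+.
\]
Therefore I can fix (once and for all, depending only on $n$) some $r_0\in(0,1)$ with $(\nu\otimes\mu)(B_{r_0})\leq 1/2$. Proposition \ref{eq35} then yields
\[
\|\chi_{B_{r_0}^c}V_gf\|_{L^2(\Z^n\times\T^n)}^2\;\geq\;\tfrac{1}{2}\,\|f\|_{\ell^2(\Z^n)}^2\,\|g\|_{\ell^2(\Z^n)}^2.
\]
Combining the two displayed inequalities gives the conclusion with the explicit constant
\[
c(s)=\frac{r_0^{2s}}{2\,C_s}>0.
\]

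The argument is essentially routine once the right ball is chosen, so the only genuine subtlety is step three: one must verify that $(\nu\otimes\mu)(B_r)$ can be made strictly less than $1$, which would fail in a purely continuous setting for any fixed $r$ but succeeds here precisely because the counting measure on $\Z^n$ gives weight zero to the punctured ball $\{0<|m|<1\}$. Everything else reduces to the pointwise inequality for $(|m|^2+|w|^2)^s$ and a direct invocation of the Donoho–Stark-type estimate proved earlier.
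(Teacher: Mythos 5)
Your proof is correct and follows essentially the same route as the paper: restrict to the complement of a small ball $B_r$ with $(\nu\otimes\mu)(B_r)<1$, apply Proposition \ref{eq35} there, and use the pointwise inequality $(|m|^2+|w|^2)^s\leq C_s(|m|^{2s}+|w|^{2s})$. Your write-up is in fact slightly more careful than the paper's, since you justify explicitly why such a ball exists (the paper merely asserts it) and you obtain the sharper constant $C_s=\max(1,2^{s-1})$ in place of $2^s$.
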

\begin{proof}
Let $r>0$  and $B_{r}=\{(m, w) \in \Z^n \times \T^n : ~|(m, w)|<r \}$ be the ball with centre at origin and radius $r$ in $\Z^n \times \T^n$. Fix $\varepsilon_{0} \leq 1$ small enough such that $(\nu \otimes \mu) (B_{\varepsilon_{0}})<1$. From Proposition \ref{eq35}, we have 
\begin{align*}
& \|f\|_{\ell^2(\Z^n)}^{2}\; \|g\|_{\ell^2(\Z^n)}^{2}\\
& \leq \frac{1}{ \left[1-(\nu \otimes \mu)(B_{\varepsilon_{0}})\right]} {\sum \int}_{\{(m, w) \in \Z^n \times \T^n : ~|(m, w)| \geq  \varepsilon_{0}\}} \left|V_{g} f(m, w)\right|^{2} ~d\mu  (w) \\
& \leq \frac{1}{\varepsilon_{0}^{2 s}\left[1-(\nu \otimes \mu)(B_{\varepsilon_{0}})\right]} {\sum \int}_{\{(m, w) \in \Z^n \times \T^n : ~|(m, w)| \geq  \varepsilon_{0}\}}|(m, w)|^{2 s}\left|V_{g} f(m, w)\right|^{2} ~d\mu(w) \\
& \leq \frac{1}{\varepsilon_{0}^{2 s}\left[1-(\nu \otimes \mu)(B_{\varepsilon_{0}})\right]} \sum_{m \in \Z^n}  \int_{\T^n} |(m, w)|^{2 s}\left|V_{g} f(m, w)\right|^{2} ~d\mu(w).
\end{align*}
Consequently,
\begin{align}\label{eq36}
\left\||(m, w)|^{s}V_{g} f\right\|_{L^2(\Z^n \times \T^n)}^2  \geq \varepsilon_{0}^{2 s}\left[1-(\nu \otimes \mu)(B_{\varepsilon_{0}})\right]\; \|f\|_{\ell^2(\Z^n)}^{2}\;\|g\|_{\ell^2(\Z^n)}^{2}.
\end{align}
Finally, using  the fact that $|a+b|^{s} \leq 2^{s}\left(|a|^{s}+|b|^{s}\right)$ and  from (\ref{eq36}), we get 
\begin{align*}
\varepsilon_{0}^{2 s}\left[1-(\nu \otimes \mu)(B_{\varepsilon_{0}})\right]\; \|f\|_{\ell^2(\Z^n)}^{2}\;\|g\|_{\ell^2(\Z^n)}^{2}
& \leq \left\||(m, w)|^{s}V_{g} f\right\|_{L^2(\Z^n \times \T^n)}^2\\
& \leq 2^s \left\| |m|^s V_{g} f\right\|_{L^2(\Z^n \times \T^n)}^2  +2^s\left\| |w|^s V_{g} f\right\|_{L^2(\Z^n \times \T^n)}^2 .
\end{align*}
Hence, we get the desired result with  $c(s)=\varepsilon_{0}^{2 s}2^{-s}\left[1-(\nu \otimes \mu)(B_{\varepsilon_{0}})\right].$ 
\end{proof}

\subsection{Local uncertainty inequality for the STFT}
Here, we discuss results related to  the $L^2(\Z^n \times \T^n)$-mass of the STFT outside sets of finite measure. Indeed, we establish the following result.
\begin{theorem} \label{eq37}
Let $s>0 $. Then there exists a constant $c(s)>0$ such that for any $f, g \in \ell^2(\Z^n)$ and any $\Sigma  \subset \Z^n \times \T^n$  such that $(\nu \otimes \mu)(\Sigma)<\infty$, we have
$$
\left\|V_{g} f\right\|_{L^{2}\left(\Sigma \right)} \leq c(s)\left[(\nu \otimes \mu)(\Sigma)\right]^{\frac{1}{2}}\left\||(m, w)|^{s}V_{g} f\right\|_{L^2(\Z^n \times \T^n)}.
$$
\end{theorem}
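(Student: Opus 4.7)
The plan is to combine the trivial $L^\infty$ bound on $V_g f$ with the Heisenberg-type inequality already established in Theorem~\ref{eq39}. Everything needed is in hand; the only question is how to turn the mixed $|m|^s, |w|^s$ quantities on the right-hand side of Theorem~\ref{eq39} into the isotropic weight $|(m,w)|^s$ appearing in the statement.

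First I would dominate the local $L^2$ mass by the sup norm: since $V_g f \in L^\infty(\Z^n\times\T^n)$, one has
\[
\|V_g f\|_{L^2(\Sigma)}^2 \;\le\; \|V_g f\|_{L^\infty(\Z^n\times\T^n)}^{2}\,(\nu\otimes\mu)(\Sigma) \;\le\; \|f\|_{\ell^2(\Z^n)}^{2}\,\|g\|_{\ell^2(\Z^n)}^{2}\,(\nu\otimes\mu)(\Sigma),
\]
where the last inequality uses the pointwise bound \eqref{eq21}. This reduces the problem to bounding $\|f\|_{\ell^2}\,\|g\|_{\ell^2}$ from above by $\||(m,w)|^s V_g f\|_{L^2(\Z^n\times\T^n)}$, independently of $\Sigma$.

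Next I would invoke the intermediate estimate \eqref{eq36} obtained inside the proof of Theorem~\ref{eq39}: for any sufficiently small $\varepsilon_0>0$ with $(\nu\otimes\mu)(B_{\varepsilon_0})<1$,
\[
\||(m,w)|^s V_g f\|_{L^2(\Z^n\times\T^n)}^{2} \;\ge\; \varepsilon_0^{2s}\bigl[1-(\nu\otimes\mu)(B_{\varepsilon_0})\bigr]\,\|f\|_{\ell^2(\Z^n)}^{2}\,\|g\|_{\ell^2(\Z^n)}^{2}.
\]
Setting $c(s) = \varepsilon_0^{-s}\bigl[1-(\nu\otimes\mu)(B_{\varepsilon_0})\bigr]^{-1/2}$, this gives
\[
\|f\|_{\ell^2(\Z^n)}\,\|g\|_{\ell^2(\Z^n)} \;\le\; c(s)\,\||(m,w)|^s V_g f\|_{L^2(\Z^n\times\T^n)}.
\]

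Finally I would substitute this bound into the first display and take square roots, obtaining
\[
\|V_g f\|_{L^2(\Sigma)} \;\le\; c(s)\,\bigl[(\nu\otimes\mu)(\Sigma)\bigr]^{1/2}\,\||(m,w)|^s V_g f\|_{L^2(\Z^n\times\T^n)},
\]
which is exactly the claim. There is no real obstacle here: the argument is a one-line splicing of two earlier results. The only thing to watch for is choosing $\varepsilon_0$ small enough so that $(\nu\otimes\mu)(B_{\varepsilon_0})<1$ (this was already observed at the start of the proof of Theorem~\ref{eq39}) and making sure the constant $c(s)$ depends only on $s$, which is clear since $\varepsilon_0$ and $(\nu\otimes\mu)(B_{\varepsilon_0})$ can be chosen once and for all independently of $f$, $g$, and $\Sigma$.
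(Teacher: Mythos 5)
Your proof is correct and follows essentially the same route as the paper: bound $\|V_g f\|_{L^2(\Sigma)}$ by the $L^\infty$ estimate \eqref{eq21} times $[(\nu\otimes\mu)(\Sigma)]^{1/2}$, then control $\|f\|_{\ell^2}\|g\|_{\ell^2}$ via the intermediate inequality \eqref{eq36} from the proof of Theorem~\ref{eq39}, yielding the same constant $c(s)=\varepsilon_0^{-s}[1-(\nu\otimes\mu)(B_{\varepsilon_0})]^{-1/2}$.
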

\begin{proof}
Since $ \|V_{g} f \|_{L^{2}\left(\Sigma \right)}\leq  \|V_{g} f \|_{L^{\infty}\left(\Z^n \times \T^n\right)}~ [(\nu \otimes \mu)(\Sigma)]^{\frac{1}{2}},$
using the relation (\ref{eq21}),  we get 
$$\left\|V_{g} f\right\|_{L^{2}\left(\Sigma \right)} \leq \left[(\nu \otimes \mu)(\Sigma)\right]^{\frac{1}{2}}\; \|f\|_{\ell^2(\Z^n)}~\|g\|_{\ell^2(\Z^n)}.$$
Thus, using inequality (\ref{eq36}), we get 
\begin{align*}
\left\|V_{g} f\right\|_{L^{2}\left(\Sigma \right)} \leq \frac{\left[(\nu \otimes \mu)(\Sigma)\right]^{\frac{1}{2}}}{\varepsilon_{0}^{s}\left[1-(\nu \otimes \mu)(B_{\varepsilon_{0}})\right]^{\frac{1}{2}}}\left\||(m, w)|^{s}V_{g} f\right\|_{L^2(\Z^n \times \T^n)}.
\end{align*}
This completes the proof of the theorem with $c(s)=\varepsilon_{0}^{-s}\left[1-(\nu \otimes \mu)(B_{\varepsilon_{0}})\right]^{-\frac{1}{2}}.$
\end{proof}

In the following, we show that Theorem \ref{eq37} gives a general form of Heisenberg-type inequality with a different constant.

\begin{corollary}
Let $s>0 .$ Then there exists a constant $c_{s}>0$ such that, for all $f, g \in$ $\ell^2(\Z^n)$, we have 
\begin{align}\label{eq38}
\left\||(m, w)|^{s}V_{g} f\right\|_{L^2(\Z^n \times \T^n)}\geq  c_{s} ~\|f\|_{\ell^2(\Z^n)}\;\|g\|_{\ell^2(\Z^n)}.
\end{align}
In particular,
\begin{align*}
\left\||m|^s V_{g} f\right\|_{L^2(\Z^n \times \T^n)}^2  +\left\||w|^s V_{g} f\right\|_{L^2(\Z^n \times \T^n)}^2 
\geq  \frac{c_{s}^2}{2^s}\; \|f\|_{\ell^2(\Z^n)}^{2}\;\|g\|_{\ell^2(\Z^n)}^{2}.\end{align*}
\end{corollary}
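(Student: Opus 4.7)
The plan is to derive \eqref{eq38} from Theorem \ref{eq37} by splitting $\Z^n \times \T^n$ into a ball of finite measure and its complement, and then reassembling via Plancherel's formula. Fix any $r > 0$ and set $B_r = \{(m,w) \in \Z^n \times \T^n : |(m,w)| \leq r\}$; since $\T^n$ is compact and the set $\{m \in \Z^n : |m| \leq r\}$ is finite, one has $(\nu \otimes \mu)(B_r) < \infty$. Applying Theorem \ref{eq37} with $\Sigma = B_r$ yields
\[
\|V_g f\|_{L^2(B_r)}^2 \;\leq\; c(s)^2 \,(\nu \otimes \mu)(B_r)\, \||(m,w)|^s V_g f\|_{L^2(\Z^n \times \T^n)}^2.
\]

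For the contribution from the complement, I would use the trivial pointwise bound $1 \leq r^{-2s}\,|(m,w)|^{2s}$ valid on $B_r^c$ to obtain
\[
\|V_g f\|_{L^2(B_r^c)}^2 \;\leq\; \frac{1}{r^{2s}}\, \||(m,w)|^s V_g f\|_{L^2(\Z^n \times \T^n)}^2.
\]
Summing the two estimates and invoking Plancherel's formula \eqref{eq17} gives
\[
\|f\|_{\ell^2(\Z^n)}^2 \|g\|_{\ell^2(\Z^n)}^2 \;=\; \|V_g f\|_{L^2(\Z^n \times \T^n)}^2 \;\leq\; \Big[c(s)^2 (\nu \otimes \mu)(B_r) + r^{-2s}\Big]\, \||(m,w)|^s V_g f\|_{L^2(\Z^n \times \T^n)}^2.
\]
Taking square roots yields \eqref{eq38} with the explicit constant $c_s = \big[c(s)^2 (\nu \otimes \mu)(B_r) + r^{-2s}\big]^{-1/2}$ for any fixed choice of $r > 0$ (for example $r = 1$).

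For the particular case, I would invoke the elementary inequality $(a+b)^s \leq 2^s(a^s + b^s)$ for $a, b \geq 0$ and $s > 0$, which follows from subadditivity of $x \mapsto x^s$ when $s \leq 1$ and from convexity when $s \geq 1$. Setting $a = |m|^2$ and $b = |w|^2$ gives $|(m,w)|^{2s} \leq 2^s(|m|^{2s} + |w|^{2s})$ pointwise, and integrating against $|V_g f|^2\, d(\nu \otimes \mu)$ produces
\[
\||(m,w)|^s V_g f\|_{L^2(\Z^n \times \T^n)}^2 \;\leq\; 2^s \Big(\||m|^s V_g f\|_{L^2(\Z^n \times \T^n)}^2 + \||w|^s V_g f\|_{L^2(\Z^n \times \T^n)}^2\Big).
\]
Combining with the squared form of \eqref{eq38} delivers the claimed lower bound with constant $c_s^2/2^s$. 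I do not anticipate any real obstacle: every step is either a direct invocation of Theorem \ref{eq37}, a pointwise tail estimate, or an elementary subadditivity/convexity inequality. The only mild point worth checking is that $(\nu \otimes \mu)(B_r) < \infty$ for the chosen $r$, which is immediate from compactness of $\T^n$.
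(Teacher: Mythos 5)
Your proof is correct and follows essentially the same route as the paper: split $\Z^n\times\T^n$ into $B_r$ and its complement, control the ball via Theorem \ref{eq37}, bound the tail by $r^{-2s}\||(m,w)|^sV_gf\|^2$, reassemble with Plancherel, and then pass to the ``in particular'' statement via $(a+b)^s\le 2^s(a^s+b^s)$. The only difference is that the paper minimizes the resulting bound over $r>0$ to get a sharper constant, whereas you fix $r=1$; since the statement only asserts the existence of some $c_s>0$, this changes nothing of substance.
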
 
\begin{proof}
Let $r>0$  and $B_{r}=\{(m, w) \in \Z^n \times \T^n : ~|(m, w)|<r \}$ be the ball with  centre at origin  and radius $r$ in $\Z^n \times \T^n$. Using Plancherel's formula (\ref{eq17}) and Theorem \ref{eq37}, we obtain
$$
\begin{aligned}
& \Vert f \Vert_{\ell^2(\Z^n)}^2 \; \Vert g \Vert_{\ell^2(\Z^n)}^2
=\left\Vert V_{g} f \right\Vert_{L^2(\Z^n \times \T^n)}^2 
=\left \|  \chi_{B_r} V_{g} f \right\|_{L^2(\Z^n \times \T^n)}^2+\left \| \chi_{B_r^{c}}  V_{g} f \right\|_{L^2(\Z^n \times \T^n)}^2  \\
& \leq c(s)^2\left[(\nu \otimes \mu)(B_r)\right]\left\||(m, w)|^{s}V_{g} f\right\|_{L^2(\Z^n \times \T^n)}^2  + r^{-2s}\left\||(m, w)|^{s}V_{g} f\right\|_{L^2(\Z^n \times \T^n)}^2.
\end{aligned}
$$
We get  the inequality  (\ref{eq38}) by minimizing the right-hand side of the above  inequality over $r>0$. Now, proceeding  similarly  as in the proof of Theorem 
\ref{eq39}, we obtain	
\begin{align*}
 2^s\left\| |m|^s V_{g} f\right\|_{L^2(\Z^n \times \T^n)}^2  +2^s\left\| |w|^s V_{g} f\right\|_{L^2(\Z^n \times \T^n)}^2 
\geq  c_{s}^2\; \|f\|_{\ell^2(\Z^n)}^{2}~\|g\|_{\ell^2(\Z^n)}^{2},
\end{align*} 
and this completes the proof.
\end{proof}

\subsection{Heisenberg-type uncertainty inequality via the $k$-entropy}
In this subsection, we study the localization of the $k$-entropy of the STFT over the space $\Z^n \times \T^n$. We first need the following definition.
\begin{definition}\quad
\begin{enumerate} 
\item A probability density function $\rho$ on $\Z^n \times \T^n$ is a non-negative measurable function on $\Z^n \times \T^n$ satisfying
$$ \sum_{m \in \Z^n}  \int_{\T^n} \rho(m, w)~d\mu(w) =1. $$
\item Let $\rho$  be a probability density function  on $\Z^n \times \T^n$. Then the $k$-entropy of  $\rho$   is defined by
$$ E_{k}(\rho):=- \sum_{m \in \Z^n}  \int_{\T^n} \ln (\rho(m, w))~ \rho(m, w)~d\mu(w),$$
whenever the integral on the right-hand side is well defined. 
\end{enumerate}
\end{definition}
Now, we prove the main result of this subsection.
\begin{theorem}
Let $g \in \ell^2(\Z^n)$ be a non-zero window function and $f \in \ell^2(\Z^n)$ such that $f\neq 0$. Then, we have 
\begin{align}\label{eq42}
E_{k}(|V_{g} f|^{2}) \geq -2 \ln \left(\|f\|_{\ell^2(\Z^n)} \;\|g\|_{\ell^2(\Z^n)}\right)  \|f\|_{\ell^2(\Z^n)}^{2}\;\|g\|_{\ell^2(\Z^n)}^{2}.
\end{align}
\end{theorem}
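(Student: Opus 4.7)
The plan is to derive the inequality directly from the two basic facts already established for the STFT on $\Z^n\times\T^n$: the pointwise bound \eqref{eq21}, which says $\left\|V_{g}f\right\|_{L^{\infty}(\Z^n\times\T^n)} \leq \|f\|_{\ell^2(\Z^n)}\,\|g\|_{\ell^2(\Z^n)}$, and Plancherel's formula \eqref{eq17}, which says $\left\|V_{g}f\right\|_{L^{2}(\Z^n\times\T^n)}^{2} = \|f\|_{\ell^2(\Z^n)}^{2}\,\|g\|_{\ell^2(\Z^n)}^{2}$. Together these pin down both the uniform ceiling and the total mass of the density $\rho = |V_{g}f|^{2}$, which is exactly the information that the $k$-entropy integrates against $-\ln\rho$.

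Concretely, set $M := \|f\|_{\ell^2(\Z^n)}^{2}\,\|g\|_{\ell^2(\Z^n)}^{2}$. By \eqref{eq21}, at every point $(m,w)\in\Z^n\times\T^n$ we have $|V_{g}f(m,w)|^{2}\leq M$. Hence, wherever $V_{g}f(m,w)\neq 0$, monotonicity of the logarithm gives
\[
-\ln\bigl(|V_{g}f(m,w)|^{2}\bigr) \;\geq\; -\ln M.
\]
Multiplying by the non-negative factor $|V_{g}f(m,w)|^{2}$ (and using the usual convention $0\ln 0 = 0$ to treat the zero set of $V_{g}f$) yields the pointwise inequality
\[
-|V_{g}f(m,w)|^{2}\,\ln\bigl(|V_{g}f(m,w)|^{2}\bigr) \;\geq\; -(\ln M)\,|V_{g}f(m,w)|^{2}.
\]

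Integrating against $d(\nu\otimes\mu)$ as in \eqref{eq16} converts the left-hand side into $E_{k}(|V_{g}f|^{2})$ by definition, while the right-hand side becomes $-(\ln M)\,\|V_{g}f\|_{L^{2}(\Z^n\times\T^n)}^{2}$, which equals $-(\ln M)\,M$ by Plancherel's formula \eqref{eq17}. Finally, substituting $\ln M = 2\ln\!\bigl(\|f\|_{\ell^2(\Z^n)}\,\|g\|_{\ell^2(\Z^n)}\bigr)$ produces exactly \eqref{eq42}.

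There is no real obstacle here; the only point to handle carefully is the set where $V_{g}f$ vanishes, which is absorbed by the convention $0\ln 0 = 0$, and the fact that $t\mapsto -t\ln t$ is bounded on $[0,M]$ (together with $\|V_{g}f\|_{L^{2}}^{2}=M<\infty$) guarantees that the series/integral defining $E_{k}(|V_{g}f|^{2})$ converges absolutely, so all manipulations above are legitimate.
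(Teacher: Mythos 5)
Your proof is correct and is essentially the paper's argument in a more direct form: the paper first normalizes $f$ and $g$, observes that $|V_\psi\phi|\le 1$ forces $E_k(|V_\psi\phi|^2)\ge 0$, and then rescales, which is exactly your pointwise bound $-\ln(|V_gf|^2)\ge -\ln M$ from \eqref{eq21} combined with Plancherel's formula \eqref{eq17}. One small imprecision: boundedness of $t\mapsto -t\ln t$ on $[0,M]$ together with $\|V_gf\|_{L^2}^2<\infty$ does \emph{not} give absolute convergence of the entropy sum (the negative part is controlled by the integrable minorant $-(\ln M)\,|V_gf|^2$, but the positive part may still diverge), yet this is immaterial since the inequality then holds with $E_k(|V_gf|^2)=+\infty$, exactly as the paper also concedes.
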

\begin{proof}
First, we assume that $ \|f\|_{\ell^2(\Z^n)}=\|g\|_{\ell^2(\Z^n)}=1$. For any $(m, w) \in \Z^n \times \T^n$, using  the relation (\ref{eq21}), we get 
$$ |V_{g} f(m, w)| \leq \; \|f\|_{\ell^2(\Z^n)}\;\|g\|_{\ell^2(\Z^n)}=1. $$
Consequently, $\ln (|V_{g} f|)\leq 0$ and   therefore  $E_{k} \big(|V_{g} f|\big) \geq 0$. The desired inequality (\ref{eq42}) holds  trivially  if the entropy $E_{k} \big(|V_{g} f|\big)$ is infinite. Now, suppose  that the entropy $E_{k} \big(|V_{g} f|\big)$ is finite. Let $ f$ and $g  $ be two non-zero functions in $\ell^2(\Z^n)$. We define 
$$
\phi=\frac{f}{\|f\|_{\ell^2(\Z^n)}} ~\text { and }~ \psi=\frac{g}{\|g\|_{\ell^2(\Z^n)}}.
$$
Then $\phi , ~ \psi \in  \ell^2(\Z^n)$ with $\|\phi\|_{\ell^2(\Z^n)}=\|\psi\|_{\ell^2(\Z^n)}=1$ and therefore 
\begin{align}\label{eq43}
E_{k} \big(|V_{\psi} \phi|\big) \geq 0.
\end{align} 
Since  $V_{\psi} \phi =\frac{1}{\|f\|_{\ell^2(\Z^n)}\|g\|_{\ell^2(\Z^n)} } V_{g} f$, we have 
\begin{align*}
E_{k} \big(|V_{\psi} \phi|^{2}\big)
& = - \sum_{m \in \Z^n}  \int_{\T^n} \ln (|V_{\psi} \phi (m, w)|^{2})~ |V_{\psi} \phi (m, w)|^{2} ~d\mu(w) \\
& =\frac{1}{ \|f\|_{\ell^2(\Z^n)}^2\|g\|_{\ell^2(\Z^n)}^2}  E_{k} ( |V_{g} f | ^{2} ) +2 \ln \big(\|f\|_{\ell^2(\Z^n) }\;\|g\|_{\ell^2(\Z^n) }\big).
\end{align*}
Finally,  from (\ref{eq43}),  we obtain  
$$
E_{k} ( |V_{g} f |^{2} ) \geq -2 \ln \left( \|f\|_{\ell^2(\Z^n)}\;\|g\|_{\ell^2(\Z^n)}\right) \|f\|_{\ell^2(\Z^n)}^{2}\;\|g\|_{\ell^2(\Z^n)}^{2}.
$$
This completes the proof of the theorem.
\end{proof}

\section*{Acknowledgments}
The authors wish to thank the anonymous referees for their helpful comments and suggestions that helped to improve the quality of the paper.

\end{document}